\documentclass{amsart}
\usepackage{amssymb}
\usepackage{fullpage}
\usepackage{tikz}
\usetikzlibrary{arrows.meta,decorations.markings}
\usepackage{subcaption}
\usepackage{longtable}
\usepackage{url}
\definecolor{mone}{HTML}{0F8B8D}
\definecolor{mtwo}{HTML}{E07A1F}
\definecolor{pcol}{HTML}{C0392B}
\definecolor{hcol}{HTML}{1F4E9C}
\definecolor{fcol}{HTML}{55606E}
\definecolor{twcol}{HTML}{7B3FA0}
\tikzset{
  midarrow/.style={postaction={decorate}, decoration={markings, mark=at position .6 with {\arrow{Stealth[length=#1]}}}},
  midarrow/.default=3.8pt,
  bv/.style={circle, fill=black, inner sep=0pt, minimum size=3.4pt},
  bvn/.style={circle, fill=black!30, inner sep=0pt, minimum size=3.4pt},
  wv/.style={circle, draw=black, fill=white, line width=.5pt, inner sep=0pt, minimum size=3.4pt},
  wvf/.style={circle, draw=black!30, fill=white, line width=.4pt, inner sep=0pt, minimum size=2.6pt},
  wvd/.style={circle, draw=twcol, fill=white, line width=.8pt, inner sep=0pt, minimum size=3.6pt},
  fv/.style={rectangle, fill=fcol, inner sep=0pt, minimum size=3pt},
  fvn/.style={rectangle, fill=black!30, inner sep=0pt, minimum size=3pt},
  gE/.style={line width=.6pt},
  gEf/.style={black!22, line width=.4pt},
  eE/.style={line width=1.8pt},
  fbE/.style={fcol!80, line width=.4pt},
  fwE/.style={fcol!40, line width=.35pt},
  nbE/.style={black!28, line width=.4pt, dash pattern=on 1.5pt off 1pt},
  hE/.style={black!60, line width=.45pt},
  m1E/.style={mone, line width=1.3pt},
  m2E/.style={mtwo, line width=1.3pt, dash pattern=on 2.5pt off 1.2pt},
  m2f/.style={mtwo, line width=.9pt, dash pattern=on 1pt off 1pt},
  tE/.style={black, line width=.55pt},
  dE/.style={twcol, line width=1.2pt},
  oE/.style={black, line width=.7pt, midarrow},
  pE/.style={pcol, line width=1.4pt, midarrow},
  pE0/.style={pcol, line width=1.4pt},
  pF/.style={pcol!70, line width=.6pt, dash pattern=on 1.5pt off 1pt},
  hamE/.style={hcol, line width=1.6pt},
  m1A/.style={mone, line width=1.2pt, midarrow=5.5pt},
  m2A/.style={mtwo, line width=1.2pt, dash pattern=on 3pt off 1.5pt, midarrow=5.5pt},
  rayE/.style={black!65, line width=.6pt, dash pattern=on .8pt off 1.2pt, -{Stealth[length=4pt]}},
  xing/.style={circle, draw=black, fill=white, line width=.5pt, inner sep=0pt, minimum size=3.2pt},
  dcurve/.style={black!45, line width=.8pt, dash pattern=on 1pt off 1.5pt},
  insideD/.style={fill=black!7},
  thirdE/.style={black!55, line width=1.2pt},
  rotA/.style={black, line width=.6pt, -{Stealth[length=3.5pt]}},
  lbl/.style={font=\scriptsize, inner sep=1pt},
  note/.style={font=\footnotesize, inner sep=1pt},
}

\newtheorem{theorem}{Theorem}
\newtheorem{lemma}[theorem]{Lemma}
\newtheorem{claim}[theorem]{Claim}
\newtheorem{corollary}[theorem]{Corollary}
\theoremstyle{definition}
\newtheorem{definition}[theorem]{Definition}
\newtheorem{remark}[theorem]{Remark}

\newcommand{\restatedref}{}
\newcommand{\restatedname}{}
\theoremstyle{plain}
\newtheorem*{restated}{\restatedname~\restatedref}
\newenvironment{restatement}[2][Lemma]
  {\renewcommand{\restatedname}{#1}\renewcommand{\restatedref}{\ref{#2}}\begin{restated}}
  {\end{restated}}

\newcommand{\cO}{\mathcal{O}}
\newcommand{\GFB}{G_{FB}}

\newcommand{\symd}{\mathbin{\triangle}}
\newcommand{\cw}{\mathrm{cw}}
\newcommand{\dir}{\mathrm{dir}}
\newcommand{\sgn}{\mathrm{sign}}
\newcommand{\orient}{\mathrm{orient}}
\newcommand{\aux}{\mathrm{aux}}
\newcommand{\trB}{\mathrm{trace}_B}
\newcommand{\Pbar}{\bar{P}}
\newcommand{\Int}{\mathrm{Int}}
\newcommand{\Ext}{\mathrm{Ext}}

\title{A note on the proof of Barnette's conjecture}
\author{Zhentao Li}
\thanks{Email: arxiv@zli2.com}

\begin{document}

\begin{abstract}
Barnette's conjecture states that every cubic 3-connected bipartite planar graph has a Hamiltonian cycle. We present a digested exposition of OpenAI's generated proof of Barnette's conjecture \cite{openai-barnette}.

\end{abstract}

\maketitle
\section{Introduction}

In this note, we provide an exposition of OpenAI's recently generated proof of Barnette's conjecture \cite{openai-barnette} that may be more accessible.

\begin{definition}
A \emph{Barnette graph} is a cubic 3-connected bipartite planar graph.
\end{definition}

\begin{theorem}[Barnette's conjecture \cite{barnette}]
Every Barnette graph has a Hamiltonian cycle.
\end{theorem}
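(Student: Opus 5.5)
The plan is to pass to the dual and reduce Hamiltonicity to a statement about partitioning faces. Let $G$ be a Barnette graph and $G^*$ its plane dual, which is an Eulerian triangulation. Since $G$ is bipartite and cubic, its faces are properly 3-colourable, and $G^*$ is 3-vertex-colourable. A Hamiltonian cycle $C$ of $G$ splits the faces into those inside $C$ and those outside. Each side induces a tree in $G^*$, because the region on either side of $C$ is a disk and $G$ is 3-connected. So the target becomes the following: partition $V(G^*)$ into $V_1 \cup V_2$ with both $G^*[V_1]$ and $G^*[V_2]$ trees. Such a partition yields a Hamiltonian cycle in $G$ as the common boundary of the two unions of faces. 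This equivalence is the standard first step and is routine.

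Next I would fix one colour class $B$ of faces from the 3-face-colouring. Following the known partial results (Goodey, Florek, Alt et al.), I would search for a partition in which every face of $B$ lies on one side, say inside. Then the task reduces to choosing, for the other two colour classes, which faces go in, so that connectivity and acyclicity hold on both sides. I would encode this with an auxiliary graph $\GFB$, whose vertices are the faces in $B$ and whose edges are the faces not in $B$. Since every non-$B$ face alternates between $B$-neighbours and neighbours of the third colour, each such face can be recorded as a hyperedge or a cyclic sequence of $B$-faces. The "inside" condition then becomes a spanning-tree-like condition in $\GFB$, and the "outside" tree condition becomes a planar-dual condition: the complement of a spanning tree in a plane graph is a tree in the dual. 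I would formulate a precise lemma stating that it suffices to find a suitable spanning tree in this auxiliary plane structure, with the third colour supplying the parity/orientation constraints.

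The core step is to prove that such a spanning structure always exists. I would argue by induction, or by minimal counterexample, using reductions that preserve the Barnette property: contracting a 4-face, or the standard reductions that shrink Barnette graphs (Holton–Melnikov–Barnette type reductions to the cube). For each reduction I would check that a good partition of the smaller graph lifts. The alternative is a global parity/orientation argument: orient the edges of $\GFB$ using the bipartition ("cw" direction around black vertices), and show by a counting or symmetric-difference argument ($\symd$ over cycles) that a tree satisfying the local orientation constraints exists. For instance, take a minimal obstruction, find a cycle $P$ in the auxiliary structure, and switch along $P$ to reduce a potential function.

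I expect this last step to be the main obstacle, for two reasons. Making the local-to-global exchange argument work requires showing that any violating cycle can be rerouted without creating a new violation on the other side. The Jordan curve/interior-exterior bookkeeping ($\Int$, $\Ext$ of a dual cycle) must also be tracked carefully. I would first try the exchange argument with a Jordan-curve parity invariant, and fall back on reducible configurations only if no clean invariant emerges.
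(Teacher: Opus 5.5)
Your first step, that Hamiltonian cycles of $G$ correspond to partitions of the dual triangulation into two induced trees, is standard and fine. Everything after it is a research plan rather than a proof. The step you call the core, that the required spanning structure always exists, is the whole content of the conjecture, and neither of your routes is carried out.

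\begin{itemize}
\item For the reduction route you give no lifting argument. You also give no inductive hypothesis strong enough to glue pieces: the paper needs the stronger statement that $G-e_0$ is Hamiltonian for every edge $e_0$ even to split along non-trivial 3-edge cuts.
\item For the exchange route you correctly foresee that rerouting a violating cycle may create new violations. You offer no invariant that controls this.
\item Your intermediate target, a partition with an entire face colour class on one side, is a genuine strengthening of Barnette's conjecture, and you give no argument for it. This is the framework of the colouring-based partial results you cite, and Alt, Payne, Schmidt and Wood explicitly note the limitations of starting from a proper colouring. The paper never uses a face 3-colouring.
\end{itemize}

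The missing ideas, as the paper supplies them, start from the vertex bipartition. Let $\GFB$ be the black-vertex/face incidence graph and $H=\GFB-N[b_0]$, and look for an ordered 2-factor $(M_1,M_2)$ of $H$ with $H-M_2$ acyclic. Then $\GFB-M_2$ is a spanning tree, and $M_2^*$ is a spanning tree of the white vertices by tree--cotree duality. Tracing this tree through the black vertices and orienting it towards $w_0$ gives a perfect matching $P$ (white out-edges plus $e_0$) whose complement is a single cycle. A 2-factor of $H$ exists by Ore's deficiency criterion, which is where cyclic 4-edge-connectivity is used.

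Crucially, the existence of a cycle-breaking $M_2$ is proved by counting, not exchange. The objects are ordered 2-factors with a marked crossing of a white half-ray by an $M_2$-edge, signed by $\cw+\dir$.
\begin{itemize}
\item The first involution switches $M_1$ along the first cycle of $H-M_2$ and fixes exactly the cycle-breaking objects. It is sign-reversing because an Euler-formula count gives $t_{C,\mathrm{out}}-t_{C,\mathrm{in}}=3$ for an alternating cycle, so $\cw'$ changes by $\pm 2$.
\item The second involution has a nonempty set of fixed points, all of one sign. So the signed count is nonzero, and the first involution must have a fixed point.
\end{itemize}
No bad $M_2$ is ever improved, which is how the paper sidesteps the obstacle you anticipate.
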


The proof presented here is essentially unchanged, but replaces some parts of the generated proof with combinatorial arguments. In particular, we replace the complex-valued exponential sum with direct bijections (namely, two sign-reversing involutions). This note focuses solely on the proof.

We'll prove a stronger version of the statement.

\begin{theorem}\label{stronger}
For every Barnette graph $G$ and every edge $e_0 \in G$, $G - e_0$ has a Hamiltonian cycle.
\end{theorem}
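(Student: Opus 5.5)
Theorem~\ref{stronger} strengthens Barnette's conjecture: a Hamiltonian cycle avoiding $e_0$ is a Hamiltonian cycle of $G$. I would prove it by a parity (counting) argument rather than by induction on a reduction. The plan is to show that some quantity, summed over all Hamiltonian cycles of $G-e_0$ with suitable signs, is nonzero. I would never sum over Hamiltonian cycles directly; instead I would express that quantity through a larger, more structured family of objects and evaluate it there.

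\textbf{Step 1: an encoding of Hamiltonian cycles.} In a plane cubic bipartite graph, a Hamiltonian cycle $C$ splits the faces into those inside $C$ and those outside. The Jordan curve theorem and the cubic structure force each color class of faces to induce a tree in the dual. I would therefore encode Hamiltonian cycles of $G-e_0$ as certain subsets or labelings of faces, for instance via the 3-face-coloring that bipartite cubic plane graphs admit. An equivalent encoding uses pairs of complementary perfect matchings $M_1, M_2$, where $M_1\cup M_2$ is a single cycle. This is presumably where the colors \texttt{mone} and \texttt{mtwo} in the figures come from. I would fix the requirement $e_0\notin M_1\cup M_2$, i.e.\ $e_0$ lies in the third perfect matching.

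\textbf{Step 2: a signed sum and two involutions.} Let $\mathcal{P}$ be the set of all pairs $(M_1,M_2)$ of disjoint perfect matchings avoiding $e_0$. Each such pair gives a 2-factor of $G-e_0$, which is a union of cycles. I would attach a sign to each pair, defined from the winding and orientation data of its cycles (the \texttt{orient}, \texttt{cw} and \texttt{sgn} notions the paper sets up). The weight should be chosen so that the pairs giving a single Hamiltonian cycle contribute with one fixed sign.

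The first sign-reversing involution would act on the 2-factors with at least two cycles. It takes a canonical cycle, chosen say by an order on faces, and flips its orientation or swaps $M_1$ and $M_2$ along it, reversing the sign. All such configurations then cancel. The weighted sum over $\mathcal{P}$ therefore equals $\pm$ the number of Hamiltonian cycles of $G-e_0$, up to a fixed multiplicity.

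\textbf{Step 3: evaluating the sum independently.} I would compute the same signed sum another way, using planarity and bipartiteness. A determinant-type (Kasteleyn/Pfaffian) evaluation of a planar bipartite matching polynomial is one option. A second sign-reversing involution, this time on a different auxiliary family, is another. Either way the goal is to show the sum equals a nonzero quantity such as $\pm1$, or an odd number. Three-connectivity and bipartiteness should enter here, to guarantee that a fixed-point-free-except-one structure exists, for example a unique spanning-tree-like fixed point rooted at $e_0$.

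\textbf{Main obstacle.} The hard part is Step 3, together with the matching choice of sign in Step 2. The sign must make the non-Hamiltonian 2-factors cancel, and it must still let the total be computed globally and shown nonzero. Any parity argument faces a known barrier: in bipartite cubic graphs, Hamiltonian cycles through an edge can come in even numbers. So a pure mod-2 count will not work, and the weight has to be a genuinely signed, orientation-sensitive invariant.

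My first attempt would be to define the sign via the rotation number of each oriented cycle in the plane. The second involution would then use the face-tree structure: fix the face incident to $e_0$, and locally re-route the outermost defect, showing that exactly one configuration survives.
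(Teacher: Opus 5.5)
\textbf{There is a genuine gap.} Your outline is in the right genre: two sign-reversing involutions on a signed family. But Step~3 carries the whole proof and you leave it empty. It names no auxiliary family, no weight, and no reason the total is nonzero, and ``three-connectivity should enter here'' stands in for an argument.

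Step~2 is also inconsistent as stated. To cancel multi-cycle 2-factors, the sign must reverse when $M_1$ and $M_2$ are swapped along one cycle. Orientation-based weights have exactly this property: the paper's $\cw$ satisfies $\cw(M_1,M_2)\neq\cw(M_1\symd C,M_2\symd C)$ for every cycle $C$ of $M_1\cup M_2$. But then the property also applies to the unique cycle of a Hamiltonian 2-factor, so $(M_1,M_2)$ and $(M_2,M_1)$ cancel and your total is $0$. The paper breaks this symmetry with extra data: a marked crossing $(w,e_2)$ of a white vertex's upward half-ray with an edge $e_2\in M_2$, with $\dir$ added to the sign.

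More fundamentally, once the cycle-swapping involution is used to cancel everything except Hamiltonian cycles, the survivors are exactly the objects whose existence is in question. You would then need an independent global evaluation of the sum, and nothing in your plan indicates what it would be.

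\textbf{What the paper does instead.} It inverts the roles of the two involutions and changes the ground graph, working in $H=\GFB-N[b_0]$ rather than in $G-e_0$.
\begin{itemize}
\item The cycle-swapping move, with marker bookkeeping, is $\iota_2$, and it proves \emph{non-vanishing}. Its fixed points are the 2-factors of $H$ with the minimum number $c_{\min}$ of cycles. Each cycle is marked by a white vertex inside it and the first edge of that cycle on its half-ray. These fixed points all have the same sign (Lemma~\ref{i2-fix}), and they exist because $H$ has a 2-factor (Lemma~\ref{two-factor}, via Ore's theorem).
\item Lemma~\ref{two-factor} is the one place where 3-connectivity is used, in the form of cyclic 4-edge-connectivity. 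For Theorem~\ref{stronger} itself, the paper reduces to this case by a minimal counterexample. It splits $G$ along a non-trivial 3-edge cut and glues Hamiltonian cycles of $G/X$ and $G/(V(G)-X)$ (Appendix~\ref{sec:3edgescut}).
\item The target is isolated by a different move, $\iota_1$, which toggles $M_1$ alone along a cycle of $H-M_2$. Its sign reversal comes from the Euler count $t_{C,\mathrm{in}}-t_{C,\mathrm{out}}=-3$ (Lemma~\ref{disk-lemma}). Its fixed points are \emph{cycle-breaking} 2-factors of $H$ (those with $H-M_2$ acyclic), not Hamiltonian cycles.
\item The Hamiltonian cycle of $G-e_0$ is then constructed rather than counted. $\GFB-M_2$ is a spanning tree, and its dual complement $T_W$ is a tree on $W$. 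Tracing $T_W$ through black vertices gives a spanning tree of $G-b_0$, oriented towards $w_0$. Its outgoing white edges together with $e_0$ form a perfect matching whose complement is a Hamiltonian cycle (Lemma~\ref{lem:ham}).
\end{itemize}
Your plan contains none of the auxiliary graph $H$, the cycle-breaking target, the ray marker, the tree--cotree conversion, or the cut reduction, and these are the substance of the proof.
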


For the remainder of this note, we'll also assume $G$ has no 3-edge cut with more than one vertex on each side. Graphs without such cuts are called \emph{cyclically 4-edge-connected}. We leave the details of the lower connectivity case to Appendix~\ref{sec:3edgescut}. So we'll prove the following.

\begin{theorem}\label{cyclic4}
For every cyclically 4-edge-connected Barnette graph $G$ and every edge $e_0 \in G$, $G - e_0$ has a Hamiltonian cycle.
\end{theorem}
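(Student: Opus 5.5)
The plan is to recast Hamiltonicity as a statement about perfect matchings and then prove it by a signed count.

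\textbf{Reduction to matchings.} Since $G$ is cubic, a Hamiltonian cycle of $G-e_0$ is exactly the complement of a perfect matching $M$ with $e_0\in M$ whose complementary $2$-factor $G-M$ is connected. So I will consider the set $\mathcal{M}$ of perfect matchings of $G$ containing $e_0$. For each $M\in\mathcal{M}$, let $c(M)$ be the number of cycles of $G-M$. The goal is to show that some $M\in\mathcal{M}$ has $c(M)=1$.

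\textbf{The signed sum.} The strategy is to define a signed weighted sum $S=\sum_{M\in\mathcal{M}} w(M)$ with two properties.
\begin{itemize}
\item[(i)] Every $M$ with $c(M)\ge 2$ contributes zero in total, because such matchings cancel in pairs.
\item[(ii)] $S\neq 0$ for an independent reason.
\end{itemize}
Together these force a matching with $c(M)=1$. I would take the weight to be built from the planar structure. Each cycle of $G-M$ is a Jordan curve, and the faces of $G$, being properly $3$-colourable because $G$ is bipartite and cubic, are split into inside and outside. The weight $w(M)$ would be a product of signs recording, for a fixed colour class $B$ of faces, the parities of how many $B$-faces each cycle encloses. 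This is the role of the quantity $\trB$ and of the face graph $\GFB$. In the original argument this is where a complex exponential sum appears; I would instead try to realise it as an explicit sign.

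\textbf{First involution: killing the disconnected terms.} For (i), take $M$ with $c(M)\ge2$. Choose canonically, for instance by a fixed ordering of faces and edges, an innermost cycle $C$ of $G-M$, and a face $f$ adjacent to $C$ across a matching edge. Then flip $M$ along an alternating cycle bounding $f$, or along $C$ itself; because $G$ is bipartite, $C$ is even, so $M\symd C$-type swaps stay perfect matchings. This gives a partner $M'$ with the same cycle count and opposite sign. It must be checked that the map is an involution, that it never uses $e_0$, and that it flips the sign. Cyclic $4$-edge-connectivity enters here: it guarantees that the face chosen next to $C$ is not separated from the rest of the graph by a $3$-edge cut, so the swap does not break the structure.

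\textbf{Second involution: the total is nonzero.} For (ii), I would expand $S$ over a different set, namely pairs of matchings $(M_1,M_2)$, that is orientations via $\dir$ and $\orient$ of their symmetric differences. I would then apply a second sign-reversing involution that cancels everything except a small set of fixed points, which can be counted directly. For a Barnette graph, a natural candidate for this set is the configurations given by the face $3$-colouring, where the edges between two colour classes form perfect matchings; their number is odd, or has a definite sign.

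\textbf{Expected obstacle.} The main obstacle will be designing the weight so that both involutions are genuinely sign-reversing at the same time. A sign chosen to make the disconnected terms cancel tends to make the second count cancel too. I expect the delicate step to be the planar parity bookkeeping, namely how flipping along a face changes the enclosed-$B$-face parities of every cycle, rather than the combinatorics of the swaps themselves.
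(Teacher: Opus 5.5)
There is a genuine gap. Your proposal never specifies the weight $w(M)$ or either involution, and the one concrete mechanism it describes fails.

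\begin{itemize}
\item If $C$ is a cycle of $G-M$, then $C$ contains no edge of $M$. So $M\symd C=M\cup C$ gives every vertex of $C$ degree $3$ and is not a perfect matching.
\item Flipping along the boundary of a face $f$ next to $C$ requires that boundary to be $M$-alternating, and nothing guarantees this.
\item Even when an $M$-alternating flip exists, it rewires the $2$-factor $G-M$ and can merge or split its cycles. So ``same cycle count'' is unsupported, and ``opposite sign'' cannot be checked for a weight that has not been defined.
\item The role you give cyclic $4$-edge-connectivity in this step is not tied to any argument.
\item Part (ii) is likewise only a hope. The three perfect matchings coming from the face $3$-colouring have complements that are unions of facial cycles. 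You give no reason a second involution would leave only these, or why their signed count would be nonzero.
\end{itemize}

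As you note yourself, finding one sign that kills every disconnected term and also survives a second count is the entire difficulty, and your plan offers no mechanism for it.

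The missing idea is to move the problem to the bipartite graph $H=\GFB-N[b_0]$. The target there is not a connected complement but a \emph{cycle-breaking $2$-factor}: disjoint perfect matchings $(M_1,M_2)$ of $H$ with $H-M_2$ acyclic. Connectivity of the final $2$-factor of $G$ then follows structurally rather than by counting.
\begin{itemize}
\item $\GFB-M_2$ is a spanning tree, and its dual complement $M_2^*$ is a spanning tree $T_W$ of the white vertices.
\item The black dual trace of $T_W$, oriented towards $w_0$, is a spanning tree of $G-b_0$.
\item Deleting $e_0$ together with all out-edges of white vertices leaves a Hamiltonian cycle.
\end{itemize}

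The signed count in the paper is over triples $(M_1,M_2,x)$, where $x$ is a crossing of a white vertex's half-ray by an $M_2$-edge, with sign $\cw(M_1,M_2)+\dir(M_2,w,e_2)$.
\begin{itemize}
\item The involution $\iota_1$ flips $M_1$ along the first cycle of $H-M_2$. That cycle is $M_1$-alternating, so the flip is legal. The key computation is that the flip changes $\cw'$ by exactly $\pm2$: Euler's formula on the hexagonal faces of $\GFB$ gives $t_{C,\mathrm{out}}-t_{C,\mathrm{in}}=3$.
\item The involution $\iota_2$ uses the half-ray pairing and swaps along cycles of $M_1\cup M_2$, so that its fixed points all have the same sign.
\item $\iota_2$ has at least one fixed point because $H$ has a $2$-factor. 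This is proved via Ore's theorem and a deficit argument, and it is where cyclic $4$-edge-connectivity is actually used, not in any swap.
\end{itemize}
Your outline would need substitutes for all of these ingredients before it becomes a proof.
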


We now sketch the proof. In the first step, we build the face-black vertex incidence graph of $G$.

\begin{definition}
(See Figure~\ref{fig:gfb}.) The face-black vertex incidence graph of a planar bipartite graph $G$ with bipartition $(B, W)$, written $\GFB$, is defined as
\begin{itemize}
\item $V(\GFB) = B \cup F(G)$
\item $E(\GFB) = \{bf : b \in f\}$
\end{itemize}
\end{definition}

\begin{figure}[t]
\centering
\input{barnette-figs/fig1}
\caption{(a) A Barnette graph $G$ and edge $e_0 = b_0w_0$ on the outer face. (b) $\aux(G)$ (c) $\GFB$ (gray and black) and $H$ (black only).}
\label{fig:gfb}
\end{figure}

\begin{proof}[Proof sketch]
See Figure~\ref{fig:sketch}. Let $(B, W)$ be the bipartition of $G$ with vertices coloured black and white, and $e_0 = b_0w_0$ with $b_0$ black and $w_0$ white.

Fix a drawing of $G$ where $b_0w_0$ is on the outer face and vertices are in general position. We find the desired Hamiltonian cycle in $G$ in three steps.

First we find a perfect matching $M$ of $\GFB - N[b_0]$ such that $\GFB - M$ is a spanning tree $T_{FB}$ of $\GFB$.

White vertices can be thought of as faces of $\GFB$. The planar dual of the complement of $T_{FB}$ is a spanning tree of the white vertices $T_W$. (This is the tree-cotree decomposition.)

Second, we turn $T_W$ into a spanning tree $T_{BW}$ of $G$ by ``tracing around it'', replacing each dual edge by a length two path through the black vertex of the corresponding primal edge.

Finally, we orient $T_{BW}$ towards $w_0$ and delete $e_0$ and the matching consisting of all outgoing edges from white vertices in this orientation. We claim the 2-factor that remains is a Hamiltonian cycle.
\end{proof}

\begin{figure}[t]
\centering
\input{barnette-figs/fig2}
\caption{Graphs in the proof sketch (a) edge-disjoint perfect matchings $M_1$, $M_2$ of $H$ (an ``ordered 2-factor'') (b) $T_{FB} = \GFB - M_2$ (black) and $T_W = M_2^*$ (purple) (c) ``trace'' of $T_W$ oriented towards $w_0$ and matching $P$ (d) $E(G) \setminus P$}
\label{fig:sketch}
\end{figure}

In what remains, we will show the cycle-breaking perfect matching and thus the spanning trees $T_{FB}$ exist, that all transformations described above are well defined and produce a Hamiltonian cycle of $G$.

In Section~\ref{sec:involutions}, we prove the Key Lemma showing the existence of the matching.

\begin{definition}
A \emph{rooted Barnette graph} $(G, B, W, b_0w_0)$ is a Barnette graph $G$ with bipartition $B$, $W$ together with an edge $b_0w_0 \in E(G)$, $b_0 \in B$.

An \emph{ordered 2-factor} $(M_1, M_2)$ of a graph $H$ is an ordered pair of edge-disjoint perfect matchings $M_1$ and $M_2$ of $H$.

A \emph{cycle-breaking 2-factor} of $H$ is an ordered 2-factor $(M_1, M_2)$ of $H$ such that $H - M_2$ is acyclic.
\end{definition}

\begin{lemma}[Key Lemma]\label{lem:matching}
Let $(G, B, W, b_0w_0)$ be a rooted Barnette graph with $G$ cyclically 4-edge-connected. Then $H = \GFB - N[b_0]$ has a cycle-breaking 2-factor.
\end{lemma}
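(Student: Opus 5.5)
The plan is to prove existence by a signed count: assign a sign to every ordered 2-factor of $H$, show that the ordered 2-factors which are \emph{not} cycle-breaking cancel in pairs, and show that the total signed sum is nonzero. Then at least one cycle-breaking 2-factor survives. This matches the remark in the introduction that the exponential sum of the generated proof is replaced by two sign-reversing involutions.

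The first step is bookkeeping. By Euler's formula, $G$ has $2|B|$ vertices, $3|B|$ edges and $|B|+2$ faces. So $\GFB$ has $2|B|+2$ vertices and $3|B|$ edges, and its faces correspond to $W$. A perfect matching $M$ of $H=\GFB-N[b_0]$ has $|B|-1$ edges, so $\GFB-M$ has exactly $|V(\GFB)|-1$ edges. Hence $\GFB - M$ is a spanning tree if and only if it is acyclic, and similarly for $H-M_2$ relative to $H$. I will use cyclic 4-edge-connectivity here to check that $H$ is connected and that no face of $G$ meets $b_0$'s faces in a degenerate way. In particular, faces of $G$ are distinct and pairwise share at most one edge, so $\GFB$ is simple and the neighbourhood of $N[b_0]$ behaves uniformly.

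Next I fix the sign. I would choose a Pfaffian (Kasteleyn) orientation of the plane graph $H$ and define $\sgn(M_1,M_2)$ as the product of the Kasteleyn signs of $M_1$ and $M_2$, so that it depends on the cycle structure of $M_1\cup M_2$. The first involution acts on ordered 2-factors whose $H-M_2$ contains a cycle. Take a canonical cycle $C$ in $H-M_2$, for instance the one enclosing the lowest-indexed white face in the fixed drawing, and modify $(M_1,M_2)$ locally along $C$. Concretely, I will try exchanging $M_2$ with $M_2\symd C'$, where $C'$ is the alternating cycle of $M_1\cup M_2$ or of $M_2\cup(E(C))$ determined by $C$.

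The operation must satisfy three conditions. It must preserve the set of ordered 2-factors. It must keep the same canonical cycle, so that it is an involution. And it must flip the sign, because a Kasteleyn orientation makes every such alternating exchange around a face-bounded cycle change the sign by $-1$. Given these, the non-cycle-breaking terms cancel, and the signed count of all ordered 2-factors equals the signed count of the cycle-breaking ones.

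The second involution evaluates the full signed sum. Fixing $M_1$ and swapping $M_1\leftrightarrow M_2$ on one canonical component of $M_1\cup M_2$ cancels most terms. I expect the survivors to be in bijection with spanning trees, via the tree--cotree correspondence between $T_{FB}$ and $T_W$, all carrying the same sign. The sum is then $\pm$ the number of spanning trees of the white-face dual, which is positive because $G$ is connected.

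I expect the main obstacle to be the definition of the canonical cycle and the sign. Each involution must be well defined, with the canonical choice invariant under the move, and must be genuinely sign-reversing. The first thing I would try is to order the faces of $H$ (the white vertices) by a fixed linear order and always act on the innermost cycle surrounding the least face. Planarity then guarantees the move does not create a new, smaller such cycle.
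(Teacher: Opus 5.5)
There is a genuine gap, and it sits exactly at the step you identified as the main obstacle: the sign.

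A Kasteleyn (Pfaffian) orientation of a plane graph is designed so that every perfect matching contributes with the \emph{same} sign. So exchanging a matching along an alternating cycle whose interior is perfectly matched (true of every cycle you would swap on here) does not change its sign. Your $\sgn(M_1,M_2)$ is then constant, and nothing can be sign-reversing.

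Changing the edge signs does not rescue a multiplicative sign either. For any signing, write $\epsilon(M)$ for the resulting sign of a perfect matching $M$. The factor by which $\epsilon(M)$ changes under $M\mapsto M\symd C$ depends only on $C$. So under $(M_1,M_2)\mapsto(M_1\symd C,M_2\symd C)$ both factors are equal, and $\epsilon(M_1)\epsilon(M_2)$ is invariant.

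The paper uses a non-multiplicative statistic instead. Orient $M_1$ from black to face-vertex and $M_2$ from face-vertex to black. Let $\cw'$ be the number of clockwise minus counterclockwise cycles, and set $\cw=\lfloor \cw'/2\rfloor \bmod 2$. Swapping a component of $M_1\cup M_2$ reverses one cycle, so $\cw'$ changes by $\pm 2$. The real content is Lemma~\ref{disk-lemma}: swapping $M_1$ along a cycle $C$ of $H-M_2$ also changes $\cw'$ by exactly $\pm 2$. This is proved by writing $\cw'=|V(H)|/6-(2/3)\sum_b\tau(M_1,M_2,b)$ and applying Euler's formula inside $C$ (inner faces of $\GFB$ are hexagons) to get $t_{C,\mathrm{out}}-t_{C,\mathrm{in}}=3$.

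Note also that the first involution should change $M_1$, not $M_2$. Since $C$ lies in $H-M_2$ and alternates between $M_1$-edges and unused edges, $M_1\symd C$ is again a perfect matching disjoint from $M_2$. This leaves $H-M_2$ and its first cycle unchanged, so no innermost-cycle argument is needed. Replacing $M_2$, as you propose, changes $H-M_2$ and breaks the involution property.

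The second involution has separate problems. Swapping on a canonical component of $M_1\cup M_2$ applies to every ordered 2-factor, so it has no fixed points at all. Moreover, the tree--cotree correspondence only covers the special trees $\GFB-M_2$, in which every black vertex other than $b_0$ has degree $2$, not arbitrary spanning trees. So there is no reason for the survivors to be counted by spanning trees.

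The paper creates fixed points by decorating objects. Each object carries $c_{\min}$ markings, each a white vertex $w$ together with an $M_2$-edge crossing the upward half-ray from $w$, and the crossing directions $\dir$ are added to the sign. $\iota_2$ then moves a marking to the paired crossing $p_C(w,e_2)$, swapping $M_1$ and $M_2$ on $C$ when that partner lies in $M_1$. Its fixed points are the objects whose 2-factor has exactly $c_{\min}$ cycles, each enclosing its marked white vertex with the marked edge as the first crossing. These all have the same sign.

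Finally, this fixed set is nonempty only because $H$ has a 2-factor at all. That is Lemma~\ref{two-factor}: Ore's theorem plus a deficit count via Euler's formula, which would otherwise yield a non-trivial 3-edge cut. This is where cyclic 4-edge-connectivity is really used, not for simplicity of $\GFB$. Your sketch never shows that $H$ has two disjoint perfect matchings. Any evaluation of the signed sum must do so, since the sum is empty whenever $H$ has none.
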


In Section~\ref{sec:spanning-tree} we prove the complement $\GFB - M_2$ of the matching is indeed a spanning tree $T_{FB}$.

\begin{lemma}\label{lem:spanning-tree}
Let $(G, B, W, b_0w_0)$ be a rooted Barnette graph. If $H = \GFB - N[b_0]$ has a cycle-breaking 2-factor $(M_1, M_2)$ then $\GFB - M_2$ is a spanning tree of $\GFB$.
\end{lemma}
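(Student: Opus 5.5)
The plan is to reduce the statement to an edge count plus acyclicity, and then to prove acyclicity through the planar dual combined with an Euler-formula count.

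\emph{Counting.} Since $G$ is cubic with $|B|=|W|$, we have $|V(G)|=2|B|$ and $|E(G)|=3|B|$. Euler's formula gives $|F(G)|=|B|+2$. Every black vertex lies on exactly three faces, so $\GFB$ has $2|B|+2$ vertices and $3|B|$ edges. The graph $H=\GFB-N[b_0]$ has $(|B|-1)+(|B|-1)=2|B|-2$ vertices, so the perfect matching $M_2$ has $|B|-1$ edges. Hence $\GFB-M_2$ has exactly $2|B|+1=|V(\GFB)|-1$ edges. It therefore suffices to show that $\GFB-M_2$ is connected, or equivalently acyclic.

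\emph{Dual reformulation.} The faces of the plane graph $\GFB$ are in bijection with $W$. Each face is bounded by the hexagon formed by the three black neighbours of a white vertex $w$ and the three faces of $G$ at $w$. By planar duality, $\GFB-M_2$ is disconnected iff the dual edge set $M_2^*$, which lives on vertex set $W$, contains a cycle. Suppose it does, and take a cycle of length $L\ge 1$ in $M_2^*$. Its Jordan curve crosses exactly $L$ edges of $M_2$, and these form the full cut $\delta(S)$ between the vertices $S$ on one side and those on the other. Now $N[b_0]$ is connected (it is a star) and contains no $M_2$ edge, so it lies entirely on one side. Let $S$ be the other side. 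Then $S\subseteq V(H)$, and every edge of $\GFB$ leaving $S$ lies in $M_2$.

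\emph{Euler count on the side $S$ (main step).} Let $k$ be the number of white faces enclosed on the $S$ side, $E_S$ the number of edges of $\GFB[S]$, and $m_S$ the number of $M_2$ edges inside $S$. The cycle is connected, so the $S$-side region is a disk. Euler's formula for the plane graph consisting of $\GFB[S]$ together with this disk gives $E_S=|S|+k-1$. Each enclosed face is a hexagon, and each cut edge borders exactly one enclosed face, which gives $6k=2E_S+L$. Each vertex of $S$ is covered exactly once by $M_2$, either internally or via a cut edge, so $|S|=2m_S+L$.

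Combining the three relations gives $4k=4m_S+3L-2$, hence $k>m_S$. On the other hand, $\GFB[S]-M_2$ is a subgraph of $H-M_2$, which is acyclic by hypothesis. This forces $E_S-m_S\le |S|-1$, that is, $k\le m_S$, a contradiction. So $M_2^*$ is acyclic, $\GFB-M_2$ is connected, and by the edge count it is a spanning tree.

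The hard part is making the topology of the third step rigorous. We need that the $S$-side region is a disk whose interior faces are whole hexagons. We also need that the cut edges are exactly the $L$ dual crossings, each bordering one inner face, which requires the cycle in $M_2^*$ to be simple. If this becomes delicate, I would first take a shortest cycle in $M_2^*$.
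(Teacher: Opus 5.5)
\textbf{There is a genuine gap: the relation $6k=2E_S+L$ is false, and your argument never uses $M_1$, without which the statement fails.}

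Your edge count, the duality step, and the relations $E_S=|S|+k-1$ and $|S|=2m_S+L$ are fine. The problem is $6k=2E_S+L$. The Jordan curve of the dual cycle passes through the $L$ white vertices of the cycle and crosses each cut edge. So both faces of $\GFB$ on either side of a cut edge are faces on the dual cycle, and neither is one of your $k$ enclosed faces. A cut edge borders no enclosed face. Counting sides of edges of $\GFB[S]$ correctly gives $6k=2E_S-\ell$, where $\ell$ is the length of the boundary walk of the face of $\GFB[S]$ containing the dual cycle. This leads to $4(k-m_S)=2L-2-\ell$, which does not contradict $k\le m_S$.

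This is not a repairable slip in the topology. Your argument uses only that $M_2$ is a perfect matching of $H$ and that $H-M_2$ is acyclic; $M_1$ never appears. With only those hypotheses the statement is false. Take the hexagonal prism with top $u_0\dots u_5$, bottom $v_0\dots v_5$ and rungs $u_iv_i$. Colour $u_0,u_2,u_4,v_1,v_3,v_5$ black. Its faces are $T$ (top), $D$ (bottom) and $Q_i=u_iu_{i+1}v_{i+1}v_i$. Put $b_0=u_4$, so the root faces are $T,Q_3,Q_4$.
\begin{itemize}
\item $M_2=\{v_1D,\,u_0Q_0,\,u_2Q_1,\,v_3Q_2,\,v_5Q_5\}$ is a perfect matching of $H$.
\item $H-M_2$ is the disjoint union of the paths $u_0Q_5$, $u_2Q_2$, $Q_0v_1Q_1$ and $v_3Dv_5$, so it is acyclic.
\item Yet $\{Q_0,v_1,Q_1\}$ is an entire component of $\GFB-M_2$, and $u_4Q_3v_3Dv_5Q_4u_4$ is a cycle of $\GFB-M_2$.
\end{itemize}
Here $S=\{Q_0,v_1,Q_1\}$, $L=3$, $E_S=2$ and $k=m_S=0$, so your relation would read $0=7$. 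The corrected relation holds with $\ell=4$. Of course, no $M_1$ exists for this $M_2$, since the component $Q_0v_1Q_1$ has no perfect matching.

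\textbf{The missing idea is to use $M_1$.} Since $M_1\subseteq E(H)\setminus M_2$, $M_1$ restricts to a perfect matching of every component of $H-M_2$. Suppose $\GFB-M_2$ were disconnected, and let $K$ be a component avoiding $b_0$.
\begin{itemize}
\item $K\subseteq V(H)$, because root faces are joined to $b_0$ by non-$M_2$ edges.
\item No black vertex of $K$ lies on a root face, since it would be joined to that root face by a non-$M_2$ edge.
\item Hence every black vertex of $K$ has degree exactly $2$ in $K$.
\item $K$ is a tree, being a connected subgraph of $H-M_2$, and every edge has exactly one black end. So it has $2\beta=|K|-1$ edges, where $\beta$ is its number of black vertices.
\end{itemize}
Thus $K$ has one more face-vertex than black vertex, contradicting that $M_1$ perfectly matches $K$. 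Connectivity plus your edge count then gives the spanning tree.

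The paper runs the same count on a cycle of $\GFB-M_2$ instead. A maximal segment of that cycle inside $H$ is a path with one more black vertex than face-vertex. It contains all $(H-M_2)$-neighbours of its black vertices, so $M_1$ cannot cover them.
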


In Section~\ref{sec:ham}, we formally define the ``black dual trace'' and orientation of the edges of the black dual trace of $T_W$. We then prove that removing $e_0$ and all outgoing edges from white vertices in the black dual trace of $T_W$ results in a Hamiltonian cycle of $G$.

\subsection{Planar embedding}

We close this introduction by fixing a planar embedding that we use throughout.

Another way to construct the face-black vertex incidence graph is to take $G$, add a vertex inside each face connected to each black vertex on that face and delete all white vertices. We call $\aux(G)$ the graph obtained from $G$ by adding a vertex inside each face, connected to all black and white vertices on that face.

$\aux(G)$ is tripartite with tripartition $(B, W, F(G))$ and the subgraph of $\aux(G)$ induced by two parts has exactly one vertex of the third part inside its face. So we may think of $W$ as faces of $\GFB$.

For the rest of this note, we fix a straight-line embedding of $\aux(G)$ where $e_0$ is on the outer face and vertices are in general position. All other graphs are viewed as subgraphs of $\aux(G)$ and keep this drawing.

\section{Basic definitions}

This section can be skipped and referenced when needed.

Let $G$ be planar and fix a drawing of $G$ where vertices are in general position.

A cycle $C$ of $G$ has a corresponding curve in the plane, separating the plane into two regions. Call one the \emph{interior} $\Int(C)$ and the other the \emph{exterior} $\Ext(C)$ so $\Ext(C)$ contains the outer face. Both include the boundary $C$. We abuse notation and also write $\Int(C)$ to mean the subgraph of $G$ induced by $\Int(C)$. We say that a vertex $v$ is \emph{inside} $C$ if $v \in \Int(C)$.

We can similarly define $\Ext(C_1, \ldots, C_k)$ for disjoint cycles $C_1$, ..., $C_k$ to mean the region outside all cycles.

A \emph{facial cycle} of $G$ is a cycle made of all edges of some face of $G$.

The planar dual of $G$ is denoted $G^*$ and is the graph whose vertices are $F(G)$ and two faces (as vertices in $G^*$) are adjacent if they share an edge in $G$.

$N(v)$ is the set of neighbours of $v$, $N[v] = N(v) \cup \{v\}$.

A \emph{2-factor} of a graph $G$ is a 2-regular spanning subgraph of $G$.

For a subset of vertices $X$ of $V(G)$, $\delta(X)$ is the set of all edges from $X$ to $V(G) \setminus X$.

A \emph{$k$-edge cut} is a $\delta(X)$ for some $X$ such that $|\delta(X)| = k$ and $X$ and $V(G) - X$ are non-empty. A \emph{non-trivial} $k$-edge cut is a $k$-edge cut where both $X$ and $V(G) - X$ have at least two vertices.

For an integer $x \geq 0$, $x \oplus 1$ is $x$ with its lowest binary bit flipped. i.e., it swaps 0 and 1, 2 and 3, 4 and 5, and so on.

An involution is a function $f$ where $f(f(x)) = x$ everywhere. i.e., it is the identity when applied twice. An element $x$ such that $f(x) = x$ is a \emph{fixed point} of $f$ and we say $f$ \emph{fixes} $x$.

\section{Finding a 2-factor}

As a first step to proving the Key Lemma, we show $H$ has a 2-factor, without requiring (for the moment) a perfect matching of the 2-factor to hit all cycles.

\begin{lemma}\label{two-factor}
Let $(G, B, W, b_0w_0)$ be a rooted Barnette graph with $G$ cyclically 4-edge-connected. Then $H = \GFB - N[b_0]$ has a 2-factor.
\end{lemma}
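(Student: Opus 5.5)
The plan is to exploit that $H$ is bipartite, with sides $B' = B \setminus \{b_0\}$ and $F' = F(G) \setminus N(b_0)$, where $N(b_0)$ is the set of the three faces at $b_0$. A 2-factor of a bipartite graph is exactly a spanning subgraph in which every vertex has degree $2$. Its existence is a bipartite degree-constrained subgraph (flow) problem, so I will check the corresponding Hall-type condition.

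\textbf{Step 1: counting.} Let $n = |V(G)|$. Then $|B| = |W| = n/2$, $G$ has $3n/2$ edges, and Euler's formula gives $|F(G)| = n/2 + 2$. Hence $|F'| = n/2 - 1 = |B'|$, so both sides of $H$ have the same size. This is necessary, since each side must contribute exactly $2|B'| = 2|F'|$ edges.

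\textbf{Step 2: minimum degree at least $2$.}
\begin{itemize}
\item Each black vertex of $G$ lies on exactly three faces. A black $b \ne b_0$ therefore has $H$-degree $3 - |\{\text{faces shared with } b_0\}|$.
\item Suppose $b$ shares two faces $f_1, f_2$ with $b_0$. I will show that this forces a non-trivial $2$- or $3$-edge cut. Walking along the boundaries of $f_1$ and $f_2$ from $b_0$ to $b$ gives a closed curve through $b_0, f_1, b, f_2$. This curve crosses $G$ only at edges incident to $b_0$ and $b$, so it separates $G$ with at most $3$ edges crossing. Each side has at least two vertices unless $b$ and $b_0$ are at distance $2$ through a common white neighbour $w$ with $\{f_1, f_2\}$ the two faces at $w$ containing both of them. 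But around $w$ the three faces contain the pairs $\{b_0,b\}$, $\{b_0,b'\}$ and $\{b,b'\}$ respectively, so $b$ lies on only one face at $w$ containing $b_0$.
\item Cyclic 4-edge-connectivity (and 3-connectivity) therefore rules out any second shared face, so every $b \in B'$ has $H$-degree at least $2$.
\item Every face in $F'$ keeps all of its black vertices. It has length at least $4$ (indeed at least $6$ except for $4$-faces), so it has at least $2$ of them.
\end{itemize}

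\textbf{Step 3: the Hall-type condition.} By the bipartite $f$-factor theorem (equivalently, max-flow min-cut with demand $2$ on each vertex and capacity $1$ on each edge), $H$ has a 2-factor if and only if
\[
2|X| \le 2|Y| + e_H\bigl(X, F' \setminus Y\bigr) \qquad \text{for all } X \subseteq B',\ Y \subseteq F'.
\]
The symmetric condition is automatic by Step 1. It suffices to take $Y$ to be the faces having at most one $H$-neighbour outside $X$ in an extremal violating pair. I would suppose a violating pair exists and pick a minimal one. Then I will use the structure of $\GFB$: it is planar and bipartite, each black vertex has degree $3$, and every face is a hexagon $b f b f b f$ around a white vertex. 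This lets me apply Euler's formula to the subgraph of $\GFB$ induced by $X \cup (F' \setminus Y)$ together with $b_0$ and its three faces. Hexagonal faces give roughly $e \le \tfrac{3}{2}(v-2)$ for such subgraphs, and comparing this with the $3|X|$ edges leaving $X$ should force $|X| - |Y| \le$ a small constant. Equality cases then correspond to $3$-edge cuts of $G$, with the boundary of the region of $X$ crossing few edges of $G$.

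\textbf{Main obstacle.} The delicate part is this last Euler and cut argument: turning a tight violating set into a non-trivial $\le 3$-edge cut of $G$. I would do this by taking the union of closed faces in $X \cup Y$ as a region and bounding the number of $G$-edges on its boundary via the deficiency. The extreme case $|X| = |B'|$ must be handled by Step 1's exact count. The removed faces $N(b_0)$ need separate care, because they lower degrees exactly along the boundary of the region.
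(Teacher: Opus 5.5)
\textbf{Genuine gap: Step 3, which carries the whole content of the lemma, is announced but not carried out.}

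Your set-up is sound. Step 1's count $|F'|=|B'|$ is exactly what lets a one-sided Hall/Ore condition yield a 2-factor, and the bipartite $f$-factor criterion is the right tool; the paper uses its face-side form. Step 2 is not needed. Its conclusion also follows at once from 3-connectivity: two faces of a 3-connected plane graph meet in at most one edge, and two black vertices are never adjacent.

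In Step 3 you say Euler's formula ``should force'' $|X|-|Y|$ to be small and that equality cases ``correspond to'' 3-edge cuts. But you never specify the region or the cut, and you give no exact relation between the deficiency and the number of cut edges. Your reduction is also inaccurate. For fixed $X$, the quantity $2|Y|+e_H(X,F'\setminus Y)=\sum_{f\in Y}2+\sum_{f\notin Y}|N_H(f)\cap X|$ is minimised by $Y=\{f:|N_H(f)\cap X|\ge 2\}$, not by the faces with at most one $H$-neighbour outside $X$. So the condition to check is $2|X|\le\sum_{f\in F'}\min(2,|N_H(f)\cap X|)$.

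The missing idea is a way to make the cut count exact. The paper works from the face side.
\begin{itemize}
\item Take $S\subseteq F'$ minimal with $d(S)=2|S|-\sum_b\min(2,|N_H(b)\cap S|)>0$. Let $G_S$ be the union of the facial cycles of the faces in $S$.
\item Minimality gives that $G_S$ is connected. Crucially, it also gives that no white vertex lies on exactly one face $f$ of $S$. Otherwise deleting $f$ lowers $2|S|$ by $2$ but lowers the black sum by at least $2$, because the two black neighbours of that white vertex on $f$ each lie on a face outside $S$.
\item Consequently every edge of $\delta(G_S)$ has a black end on exactly one face of $S$ and a white end outside $G_S$. Double counting then gives $|\delta(G_S)|=3(|B\cap V(G_S)|-|W\cap V(G_S)|)$.
\item Euler's formula for $G_S$ becomes $d(S)=4-2k-|\delta(G_S)|/3$, where $k\ge 1$ is the number of faces of $G_S$ not in $S$ (one of them contains $b_0$).
\item Hence $|\delta(G_S)|\le 3$. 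This cut is non-trivial since $b_0,w_0\notin V(G_S)$, contradicting cyclic 4-edge-connectivity.
\end{itemize}

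A generic inequality such as $e\le\frac32(v-2)$ for subgraphs of $\GFB$ does not by itself yield this. Without first eliminating white vertices on exactly one face of the region, edges can leave the region at white vertices, and the deficiency no longer controls the cut size. As written, your proposal identifies the main obstacle but does not resolve it, so it is not yet a proof.
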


Applying Ore's theorem to $H$ is straightforward. We use the following form.

\begin{theorem}[\cite{ore}]\label{ore}
Let $H$ be a bipartite graph with bipartition $(B', F')$. $H$ has a 2-factor exactly when $\sum_{b \in B'} \min(2, |N(b) \cap S|) \geq 2|S|$ for every $S \subseteq F'$.
\end{theorem}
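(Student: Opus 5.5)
Ore's theorem is cited rather than proved in the paper, so what follows is the standard argument I would use. Note first that as stated the criterion needs the implicit hypothesis $|B'| = |F'|$. A 2-factor forces every vertex to have degree $2$, so counting edges gives $2|B'| = 2|F'|$. Conversely, the displayed inequality alone does not bound $|B'|$ from above; for example, three vertices in $B'$ all adjacent to a single vertex of $F'$ satisfy it. So I would prove the statement under $|B'| = |F'|$, which is the situation in which Lemma~\ref{two-factor} applies it: one checks $|B(G)| - 1 = |F(G)| - 3$ for $H = \GFB - N[b_0]$, using Euler's formula for cubic bipartite planar graphs. Necessity is then immediate. Given a 2-factor and $S \subseteq F'$, the $2|S|$ factor edges at $S$ are incident to black vertices, and each $b$ carries at most $\min(2, |N(b)\cap S|)$ of them.

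For sufficiency I would reduce to max-flow/min-cut. Build a network with a source $s$, an arc $s \to b$ of capacity $2$ for each $b \in B'$, an arc $b \to f$ of capacity $1$ for each edge $bf$ of $H$, and an arc $f \to t$ of capacity $2$ for each $f \in F'$. By integrality, an integral flow of value $2|F'|$ saturates every arc into $t$. The flow conservation at each $b$ then gives total $2|F'| = 2|B'|$ out of $s$, so every arc out of $s$ is also saturated. The unit-capacity arcs carrying flow therefore form a spanning subgraph in which every vertex has degree exactly $2$, i.e.\ a 2-factor.

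It remains to show every $s$--$t$ cut has capacity at least $2|F'|$; this computation is the only real step. Take a cut with source side $\{s\} \cup X \cup (F' \setminus S)$, where $X \subseteq B'$ and $S \subseteq F'$ is the part of $F'$ on the sink side. Its capacity is
\[ 2|B' \setminus X| + \sum_{b \in X} |N(b) \cap S| + 2|F' \setminus S|. \]
For fixed $S$ this is minimized by placing $b$ in $X$ exactly when $|N(b)\cap S| < 2$. The minimum over $X$ is then $\sum_{b \in B'} \min(2, |N(b)\cap S|) + 2|F'| - 2|S|$, which is at least $2|F'|$ precisely by the hypothesis. Hence the max flow equals $2|F'|$ and a 2-factor exists.

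I expect no genuine obstacle here. The only subtle point is the bookkeeping of the balance condition $|B'|=|F'|$, which must be verified separately when this criterion is invoked for $H$ in Lemma~\ref{two-factor}.
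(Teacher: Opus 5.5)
Your proof is correct. Note that the paper gives no proof of this statement; it only cites Ore. So your max-flow/min-cut argument is a self-contained substitute for the citation rather than a competitor to an argument in the text.

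The steps check out:
\begin{itemize}
\item Every $s$--$t$ cut has the form you write down.
\item Minimizing vertex by vertex over $X$ gives exactly $\sum_{b}\min(2,|N(b)\cap S|)+2|F'|-2|S|$, so the hypothesis yields min cut $=2|F'|$.
\item Integrality, combined with $|B'|=|F'|$ (which saturates the source arcs as well as the sink arcs), turns an integral flow of value $2|F'|$ into a 2-regular spanning subgraph.
\end{itemize}

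Your correction to the statement is genuine. As printed, the criterion quantifies only over $S\subseteq F'$ and is false without the balance condition; your $K_{1,3}$ centred in $F'$ is a valid counterexample. The standard form of Ore's bipartite degree-factor theorem includes $|B'|=|F'|$. Alternatively, one can require the inequality for subsets of both sides, which forces balance by taking $S=F'$ and $S=B'$.

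The one place the paper uses the theorem is the proof of Lemma~\ref{two-factor}, and there the balance does hold. Euler's formula for a cubic plane graph gives $|F(G)|=2+|V(G)|/2=2+|B|$. By 3-connectivity, $b_0$ lies on three distinct faces. Hence $|F'|=|F(G)|-3=|B|-1=|B'|$ for $H=\GFB-N[b_0]$.

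So your route buys two things over the bare citation: it exposes a hypothesis the paper's statement omits, and it verifies that the application satisfies it, at the cost of a short standard flow argument.
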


So it remains to prove this condition holds for $H$.

\begin{definition}
For $S \subseteq F' = F(G)-N(b_0)$, let $d(S) = 2|S| - \sum_{b \in B'} \min(2, |N(b) \cap S|)$ be the \emph{deficit} of $S$.
\end{definition}

Let $B_{S, i} = \{b \in B \mid |N_H(b) \cap S| = i\}$ and $W_{S, i} = \{w \in W \mid |N_{\aux(G)}(w) \cap S| = i\}$. We omit the index $S$ when it is clear from context.

Rewrite $d(S)$ as $d(S) = 2|S| - (|B_{S, 1}| + 2|B_{S, 2}| + 2|B_{S, 3}|)$

\begin{proof}[Proof of Lemma~\ref{two-factor}]
Suppose for a contradiction, there's a set with positive deficit. Let $S \subseteq F(G)-N(b_0)$ be a minimal set with positive deficit.

Let $G_S$ be the subgraph of $G$ (and $\aux(G)$) where

\begin{itemize}
\item $E(G_S)$ is the union of edges on all facial cycles of faces in $S$ (face-vertex of $S$ seen as faces of $G$) and
\item $V(G_S)$ is all (black and white) vertices that are ends of edges in $E(G_S)$.
\end{itemize}

We view $G_S$ as a subgraph of $G$ and will refer to its faces rather than face-vertices of $\aux(G)$.

By minimality of $S$ and the fact deficit is additive on components, $G_S$ is connected. Since each vertex of $G$ is on 3 faces and $G_S$ is the union of facial cycles, $G_S$ is 2-connected.

We examine the edges $\delta(G_S)$ from $G_S$ to the rest of $G$ and will eventually show it's a non-trivial 3-edge cut (contradicting cyclic 4-edge-connectivity).

Only vertices in $B_1$ or $W_1$ can have an incident edge in $\delta(G_S)$ since if $v$ lies on at least two faces of $S$, their facial cycles contain all three edges at $v$.

We now show all edges of $\delta(G_S)$ have an end in $B_1$.

\begin{claim}
$W_{S, 1}$ is empty
\end{claim}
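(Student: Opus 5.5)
The plan is to exploit the minimality of $S$ through a one-face deletion computation, and then show that a white vertex in $W_{S,1}$ forces two black vertices on a single face of $S$ that violate the resulting local bound.

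\textbf{Step 1: effect of deleting one face.} Fix $f \in S$ and compare $d(S)$ with $d(S - f)$. Removing $f$ lowers $2|S|$ by $2$. The sum $\sum_{b\in B'} \min(2,|N(b)\cap S|)$ changes only at black vertices $b \in B'$ lying on $f$:
\begin{itemize}
\item if $b \in B_{S,1}$ or $b \in B_{S,2}$, the term $\min(2,\cdot)$ drops by exactly $1$;
\item if $b \in B_{S,3}$, the term stays equal to $2$.
\end{itemize}
Writing $k_f$ for the number of black vertices of $f$ that lie in $B_{S,1}\cup B_{S,2}$, this gives
\[ d(S-f) = d(S) - 2 + k_f. \]

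\textbf{Step 2: the local bound $k_f \le 1$.} By minimality of $S$, the proper subset $S-f$ has non-positive deficit; this includes the case $S-f=\emptyset$, whose deficit is $0$. Since $d(S) \ge 1$, Step 1 gives $0 \ge d(S-f) \ge k_f - 1$. Hence every face $f \in S$ has at most one black vertex lying on fewer than three faces of $S$.

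\textbf{Step 3: a white vertex in $W_{S,1}$ gives a contradiction.} Suppose $w \in W_{S,1}$, and let $f$ be the unique face of $S$ containing $w$. On the facial cycle of $f$, $w$ has two distinct black neighbours $b_1, b_2$. The edge $wb_1$ lies on $f$ and on a second face $f_1$ incident to $w$. Since $w$ lies on only one face of $S$, $f_1 \notin S$, so $b_1$ lies on the non-$S$ face $f_1$ and therefore on at most two faces of $S$. The same argument applies to $b_2$ with the other face $f_2$ at $w$.

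Next, $b_1, b_2 \ne b_0$. Indeed, $f \in S \subseteq F(G) - N(b_0)$, so $b_0$ is not on $f$. Hence $b_1, b_2 \in B'$, and each has $|N_H(b_i)\cap S| \in \{1,2\}$, since it is on $f$. Thus both lie in $B_{S,1}\cup B_{S,2}$ and are on $f$, giving $k_f \ge 2$. This contradicts Step 2, so $W_{S,1}$ is empty.

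\textbf{Main obstacle.} The main point needing care is bookkeeping. The black counts are taken in $H$ (with $b_0$ and $N(b_0)$ removed), whereas the white counts use $\aux(G)$. So one must check that the relevant black vertices are genuinely in $B'$ and that the faces $f_1, f_2$ are counted correctly whether or not they lie in $N(b_0)$. Both points go through as noted, because membership of $f$ in $S$ already excludes $b_0$ from $f$, and $f_1, f_2 \notin S$ holds in either case.
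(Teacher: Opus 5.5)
Your proposal is correct and is essentially the paper's proof: the two black neighbours of $w$ on $f$ each lie on a face at $w$ outside $S$, so deleting $f$ gives $d(S-f) \geq d(S) \geq 1$, contradicting minimality of $S$. Your explicit formula $d(S-f)=d(S)-2+k_f$ and your checks that $b_1,b_2 \neq b_0$ and that $S-f=\emptyset$ is harmless simply spell out bookkeeping that the paper leaves implicit.
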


\begin{proof}[Proof of the claim]
Suppose a white vertex $w \in W_1$ is on exactly one face $f \in S$. Then its neighbours in $\aux(G)$ going clockwise are

$f$, black in $G_S$, face not in $S$, black not in $G_S$, face not in $S$, black in $G_S$

The two black vertices in $G_S$ here have a neighbouring face not in $S$ so they're in $B_1$ or $B_2$ and removing $f$ increases their contribution to the deficit by 1 each so $d(S - f) \geq d(S)$, a contradiction to the minimality of $S$.
\end{proof}

So all edges of $\delta(G_S)$ have an end in $B_{S, 1}$ and the other (white) end outside $V(G_S)$ (since the other end can't be in any $W_{S, i}, i \geq 1$). By construction of $G_S$, every vertex of $B_{S, 1}$ is incident to exactly one edge in $\delta(G_S)$. So $|\delta(G_S)| = |B_{S, 1}|$.

Counting edges in $G[V(G_S)]$ from both the white and black vertices' side, we get

\[ 3|B \cap V(G_S)| - |\delta(G_S)| = 3|W \cap V(G_S)| \]

or

\[ |\delta(G_S)| = 3(|B \cap V(G_S)| - |W \cap V(G_S)|) \]

To bound this value, we apply Euler's formula to $G_S$. Let $k$ be the number of faces of $G_S$ that are not faces in $S$.

\begin{itemize}
\item $|V(G_S)| = |B_1| + |B_2| + |B_3| + |W_{S, 1}| + |W_2| + |W_3|$
\item $2|E(G_S)| = 2|B_1| + 3|B_2| + 3|B_3| + 2|W_{S, 1}| + 3|W_2| + 3|W_3|$
\item $|F(G_S)| = |S| + k$
\end{itemize}

\begin{align*}
|B_1| + |B_2| + |B_3| + |W_{S, 1}| + |W_2| + |W_3| + |S| + k &= 2 + |B_1| + (3/2)|B_2| + (3/2)|B_3| + |W_{S, 1}| + (3/2)|W_2| + (3/2)|W_3| \\
|S| + k &= 2 + (1/2)(|B_2| + |B_3| + |W_2| + |W_3|) \\
2|S| + 2k &= 4 + |B_2| + |B_3| + |W_2| + |W_3| \\
d(S) + 2k &= 4 - |B_1| - |B_2| - |B_3| + |W_2| + |W_3|
\end{align*}

Since $W_{S, 1}$ is empty, this is

\[ d(S) = 4 - 2k - (|B \cap V(G_S)| - |W \cap V(G_S)|) \]

and since $|\delta(G_S)| = 3(|B \cap V(G_S)| - |W \cap V(G_S)|)$

\[ d(S) = 4 - 2k - |\delta(G_S)|/3 \]

Since $d(S) \geq 1$, $k \leq 1$. $k = 1$ since $G_S$ has a face of $G$ not in $S$, namely the one containing $b_0$ in its interior. So $|\delta(G_S)| \leq 3$.

$\delta(G_S)$ is a non-trivial cut since one side contains $\{b_0, w_0\}$ and the other contains at least 4 vertices from any one facial cycle of a face in $S$. This is a contradiction to $G$ having no non-trivial 3-edge cuts.
\end{proof}

\section{Sign-reversing involutions}\label{sec:involutions}

In this section, we prove the Key Lemma, Lemma~\ref{lem:matching}. We first sketch the proof.

\begin{proof}[Proof sketch]
To prove the existence of a cycle-breaking perfect matching in $H = \GFB - N[b_0]$ for a rooted Barnette graph $(G, B, W, b_0w_0)$, we define two sign-reversing involutions on combinatorial objects based on these matchings. To do so, we define

\begin{enumerate}
\item some object $\cO$ and partition $\cO^+$, $\cO^-$ of $\cO$, and
\item two involutions $\iota_1$, $\iota_2: \cO \to \cO$ where non-fixed points map $\cO^+$ to objects in $\cO^-$ (and vice-versa).
\end{enumerate}

We construct $\iota_1$ so it fixes exactly the objects whose 2-factor is cycle-breaking. So if no cycle-breaking 2-factor exists, then $|\cO^+| = |\cO^-|$.

We construct $\iota_2$ so its set of fixed points is non-empty and they are all in the same part (say, $\cO^+$) so $|\cO^+| \neq |\cO^-|$.

So together they show that some cycle-breaking matching exists, proving the lemma.
\end{proof}

\subsection{Combinatorial objects}

We proceed with the definitions. Each 2-factor of $H$ is a union of cycles. Let $c_{\min}$ be the minimum number of cycles among all 2-factors of $H$.

\begin{definition}
A \emph{half-ray} of a white vertex $w$ is a straight upwards half-infinite line from $w$ (in our fixed drawing of $H$).
\end{definition}

\begin{definition}[of $\cO$]
Each object $o = (M_1, M_2, \{x_1, \ldots, x_{c_{\min}}\}) \in \cO$ is
\begin{enumerate}
\item an ordered 2-factor $(M_1, M_2)$ of $H$, and
\item $c_{\min}$ intersections between a half-ray of a white vertex $w$ and an edge of $M_2$.
\end{enumerate}
\end{definition}

Note the union of $M_1$ and $M_2$ is a 2-factor of $H$.

Equivalently, item 2 is $c_{\min}$ white vertices and $M_2$ edges $(w_i, e_{2,i})$ (for $i \in \{1, \ldots, c_{\min}\}$) where the half-ray of each $w_i$ intersects $e_{2,i}$. This second part helps us encode whether a white vertex is inside or outside a cycle in the fixed planar drawing of $H$ we chose.

For the remainder of this note, we restrict ourselves to the case where $c_{\min} = 1$ (i.e., $H$ is itself Hamiltonian) and extend this to higher values of $c_{\min}$ in Appendix~\ref{sec:large-cmin}. The extension is technical but straightforward.

Thus our objects are $(M_1, M_2, x)$ where $x$ is the intersection of a white vertex half-ray with an edge of $M_2$.

\subsection{Motivation}

Before defining $\cO^+$ and $\cO^-$ and the involutions, we note some basic properties of pairs of matchings we'll need. We call the \emph{sign} of an object + or - depending on its membership in $\cO^+$ or $\cO^-$.

Let $(M_1, M_2)$ be an ordered 2-factor of $H$.

Recall that $H$ is bipartite. Thus, for a cycle $C$ of $H[M_1 \cup M_2]$, we can swap the edges of $M_1$ and $M_2$ in $C$ to produce a new ordered 2-factor $(M_1 \symd C, M_2 \symd C)$ where $\symd$ is the symmetric difference operator defined as $X \symd Y = (X \cup Y) \setminus (X \cap Y)$. (We abuse notation and write $C$ here instead of $E(C)$.)

For the construction of our involutions, we often want $(M_1, M_2, x)$ and $(M_1 \symd C, M_2 \symd C, x)$ to have different signs. To achieve this, we define an orientation $\orient(M_1, M_2)$.

\begin{definition}
Given an ordered 2-factor $(M_1, M_2)$ of $H$, $\orient(M_1, M_2)$ is a directed graph on $V(H)$ where
\begin{itemize}
\item all edges of $M_1$ are directed from black vertex to face-vertex, and
\item all edges of $M_2$ are directed from face-vertex to black vertex.
\end{itemize}
\end{definition}

Since $M_1$ and $M_2$ are perfect matchings, $\orient(M_1, M_2)$ is thus a union of directed cycles.

For our fixed drawing of $H$, we can then define

$\cw'(M_1, M_2)$ as the number of clockwise cycles minus the number of counterclockwise cycles in $\orient(M_1, M_2)$

and

$\cw(M_1, M_2)$ as $\lfloor\cw'(M_1, M_2) / 2\rfloor \bmod 2$.

Note $\cw(M_1, M_2) \in \{0, 1\}$.

$\cw$ is chosen to have the following property.

\begin{lemma}
Let $(G, B, W, b_0w_0)$ be a rooted Barnette graph and $H = \GFB - N[b_0]$. For any ordered 2-factor $(M_1, M_2)$ of $H$ and any cycle $C$ of $H[M_1 \cup M_2]$, $\cw(M_1, M_2) \neq \cw(M_1 \symd C, M_2 \symd C)$.
\end{lemma}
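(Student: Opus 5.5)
Swapping $M_1$ and $M_2$ on $C$ leaves every other cycle of $\orient(M_1, M_2)$ untouched, with its direction unchanged, and reverses the direction of $C$ itself. Indeed, after the swap the former $M_1$-edges of $C$ lie in $M_2$ and are directed face-vertex to black, and the former $M_2$-edges lie in $M_1$ and are directed black to face-vertex. So if $C$ was clockwise it becomes counterclockwise, and vice versa. Consequently
\[ \cw'(M_1 \symd C, M_2 \symd C) = \cw'(M_1, M_2) \mp 2, \]
with the sign $-$ if $C$ was clockwise and $+$ if it was counterclockwise. The whole proof then reduces to checking that $\lfloor \cdot / 2\rfloor \bmod 2$ distinguishes integers that differ by exactly $2$.

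The steps in order are as follows. First, I note that $(M_1 \symd C, M_2 \symd C)$ is again an ordered 2-factor with the same underlying cycle set $H[M_1 \cup M_2]$. This holds because $C$ alternates between $M_1$ and $M_2$, as $H$ is bipartite and each vertex has one edge of each matching. Second, I verify the orientation reversal edge by edge, as above. A directed cycle in a straight-line drawing in general position is a simple closed polygon, so it is either clockwise or counterclockwise, and reversing it flips this. Third, I let $n = \cw'(M_1, M_2)$ and $n' = n \pm 2$. Then $\lfloor n'/2\rfloor = \lfloor n/2\rfloor \pm 1$ exactly, since $\lfloor (n+2)/2\rfloor = \lfloor n/2\rfloor + 1$ holds for all integers, including negative ones. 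Hence the two floors have different parity, and so $\cw(M_1, M_2) \neq \cw(M_1 \symd C, M_2 \symd C)$.

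There is no real obstacle here. The only points needing care are that the floor identity works for negative $n$, and that the $\bmod 2$ is taken as the representative in $\{0,1\}$. One should also check that the cycle $C$ is genuinely a cycle of length at least 4. It cannot be a digon, because $M_1$ and $M_2$ are edge-disjoint, so its clockwise or counterclockwise sense is well defined.
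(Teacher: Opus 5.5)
Your proof is correct and follows the same approach as the paper: swapping along $C$ reverses only the orientation of $C$, so $\cw'$ shifts by exactly $\pm 2$ and $\lfloor \cw'/2\rfloor$ changes parity. You spell out the floor computation (including negative values), which the paper's one-line proof leaves implicit.
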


\begin{proof}
Swapping $M_1$ and $M_2$ along $C$ changes the orientation of $C$ and so $\cw(M_1, M_2)$ and $\cw(M_1 \symd C, M_2 \symd C)$ have different values.
\end{proof}

We can also swap $M_1$ along any cycle $C$ of $H - M_2$, since it alternates between $M_1$-edges and edges of $E(H) - M_1 - M_2$. This changes $\cw'(M_1, M_2)$ by $\pm 2$, which we prove later.

\begin{lemma}\label{disk-lemma}
Let $(G, B, W, b_0w_0)$ be a rooted Barnette graph and $H = \GFB - N[b_0]$. For any ordered 2-factor $(M_1, M_2)$ of $H$ and any cycle $C$ of $H - M_2$, $\cw(M_1, M_2) \neq \cw(M_1 \symd C, M_2)$.
\end{lemma}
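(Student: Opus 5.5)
The plan is to track $\cw'$ through the total turning of the directed cycles, and to localize the effect of the swap to the vertices of $C$. Since $\lfloor (x\pm 2)/2\rfloor = \lfloor x/2\rfloor \pm 1$, it suffices to show that $\cw'(M_1\symd C, M_2)-\cw'(M_1,M_2) = \pm 2$, as the text before the statement promises. Write $M_1' = M_1 \symd C$. Because $C$ alternates between $M_1$-edges and edges of $E(H)-M_1-M_2$, $M_1'$ is again a perfect matching disjoint from $M_2$, so $(M_1',M_2)$ is an ordered 2-factor.

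\textbf{Turning numbers.} For a directed cycle $D$ in our straight-line drawing, $2\pi$ times ($-1$ if clockwise, $+1$ if counterclockwise) equals the sum of the signed exterior angles $\theta_v\in(-\pi,\pi)$ at its vertices. Hence $-2\pi\,\cw'(M_1,M_2) = \sum_{v\in V(H)} \theta_v(M_1,M_2)$, where $\theta_v$ is the turn at $v$ from its in-edge to its out-edge in $\orient(M_1,M_2)$. The swap changes the in/out edges only at vertices of $C$. At each such $v$, the $M_2$-edge $a_v$ is kept, and the $M_1$-edge $m_v\in C$ is replaced by the other $C$-edge $n_v$ at $v$; it is the out-edge if $v$ is black and the in-edge if $v$ is a face-vertex.

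\textbf{Relating the local changes to $C$.} Orient $C$ so that its $M_1$-edges go black $\to$ face and its other edges go face $\to$ black. I would show that, modulo $2\pi$, the sum of $\theta_v(M_1',M_2)-\theta_v(M_1,M_2)$ over $v\in V(C)$ equals the total turning of the directed cycle $C$, i.e.\ $\pm 2\pi$. This is a telescoping of direction angles, since the edges $m_v,n_v$ are exactly the consecutive edges of $C$. The exact value then differs from $\pm 2\pi$ by $2\pi$ times a correction count. The correction at $v$ is $\pm 1$ exactly when the fixed edge $a_v$ lies on one side of $C$ rather than the other, that is, when $a_v$ points into $\Int(C)$ versus $\Ext(C)$, with a sign depending on the colour of $v$. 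So $\Delta\cw' = \pm(1 + \epsilon)$, where $\epsilon$ is a signed count of the vertices of $C$ whose $M_2$-edge enters $\Int(C)$: black ones count with one sign and face-vertices with the other.

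\textbf{The main obstacle: showing $\epsilon \equiv 1 \pmod 2$ with the right sign,} which forces $\Delta\cw' = \pm 2$ rather than $0$. I would argue as follows.
\begin{itemize}
\item Since $M_2$ is a perfect matching of $H$, the vertices strictly inside $C$ are matched either to each other or to vertices of $C$. The number of $C$-vertices whose $M_2$-edge points inward therefore equals $|\text{inner black}| - |\text{inner face}|$ plus a balance term. The colour-signed count $\epsilon$ is then determined by the numbers of black and face vertices strictly inside $C$.
\item To evaluate these numbers, apply Euler's formula to $\Int(C)$ in the plane bipartite graph $H$. Every bounded face of $\GFB$ is a hexagon (a white vertex has degree 3), so it has length $\equiv 2 \pmod 4$. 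This is Kasteleyn's ``clockwise-odd'' situation, and the count should give $|\text{inner black}| - |\text{inner face}| \equiv$ (a fixed function of $|C|$) $\pmod 2$, exactly what makes $1+\epsilon \in \{\pm 2\}$.
\end{itemize}
The delicate point is the faces of $H$ created by deleting $N[b_0]$. These merge several hexagons into one face, whose length may not be $\equiv 2 \pmod 4$. I would first check whether they can lie inside $C$ at all. If they can, I would run the count in $\GFB$ instead of $H$, treating the deleted vertices $N[b_0]$ as interior vertices with no matching constraints and using that $b_0$ has three face-neighbours to restore the parity. I expect this case to be where the real work lies, possibly also needing that $w_0$ and $e_0$ are on the outer face.
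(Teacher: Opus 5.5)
There is a genuine gap in the step \emph{Relating the local changes to $C$}, and it carries over into your ``main obstacle''. With your orientation of $C$, the sign of the correction does \emph{not} depend on the colour of $v$. Write $t_C(v)$ for the turn of $C$ at $v$. At black vertices and face-vertices alike, the turn of $\orient(M_1,M_2)$ at $v$ changes by $\pi - t_C(v)$ if $a_v$ lies to the right of $C$, and by $-\pi - t_C(v)$ if it lies to the left. To check this, let $C$ run straight north through $v$ with $a_v$ pointing west. At a black $v$ the turn goes from $a_v$-then-$m_v$ ($+\pi/2$) to $a_v$-then-$n_v$ ($-\pi/2$). At a face-vertex it goes from $m_v$-then-$a_v$ ($+\pi/2$) to $n_v$-then-$a_v$ ($-\pi/2$).

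Summing, for counterclockwise $C$ one gets $\Delta\cw' = 1 - |C|/2 + k$, and the negative of this for clockwise $C$. Here $k$ is the \emph{unsigned} number of vertices of $C$ whose $M_2$-edge points into $\Int(C)$. Your colour-signed $\epsilon$ is in fact identically $0$, for two reasons.
\begin{enumerate}
\item Nothing of $N[b_0]$ lies inside $C$. Indeed, $b_0$ is on the outer face, and each root face-vertex is joined to $b_0$ by an edge disjoint from $C$. This also settles your ``delicate point''.
\item Since $C$ is $M_1$-alternating, $M_1$ matches the vertices strictly inside $C$ among themselves, so there are equally many black and face vertices there. Counting $M_2$-edges then shows that equally many black and face vertices of $C$ send their $M_2$-edge inward.
\end{enumerate}
Your formula would therefore give $\Delta\cw' = \pm 1$, so proving $\epsilon$ odd is impossible. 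Separately, a mod-$2$ Kasteleyn-type count could never suffice: $\cw$ sees $\cw'$ modulo $4$, and you need $\Delta\cw' \equiv 2 \pmod 4$.

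What is actually needed is the exact value $k = |C|/2 - 3$. For the black vertices this is the paper's Claim $t_{C,\mathrm{in}} = |C|/4 - 3/2$. It is proved by Euler's formula on the part $H_C$ of $\GFB$ inside $C$, using three facts. Every bounded face is a hexagon. Black vertices have degree $3$, so $|E(H_C)| = 3\beta - t_{C,\mathrm{out}}$. And, by the $M_1$-balance above, $|V(H_C)| = 2\beta$; this is where $M_1$ enters. The same balance gives equally many face-vertices of $C$ with an inward $M_2$-edge, so $k = 2t_{C,\mathrm{in}}$ and $\Delta\cw' = \mp 2$.

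Repaired this way, your turning-number argument is a legitimate alternative to the paper's route. In your version, locality of $\cw'$ comes for free, but you must also account for the face-vertices of $C$. The paper instead obtains locality by writing $\cw' = |V(H)|/6 - (2/3)\sum_b \tau(M_1,M_2,b)$, which applies the same Claim to every cycle of the 2-factor. It then only needs the black vertices of $C$: the swap flips $\tau$ there and changes $\sum_b\tau$ by $\pm(t_{C,\mathrm{in}} - t_{C,\mathrm{out}}) = \pm 3$.
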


$\orient(M_1, M_2)$ also allows us to define a direction for edges crossing a white vertex half-ray.

\begin{definition}
For a white vertex $w$ and an edge $e_2$ of $M_2$ that crosses the half-ray of $w$, let $\dir(M_2, w, e_2) = 0$ if $e_2$ crosses the half-ray of $w$ from left to right in $\orient(M_1, M_2)$, and $\dir(M_2, w, e_2) = 1$ otherwise.
\end{definition}

Finally, we need the following facts about white vertex half-rays.

\begin{remark}
Let $C$ be a cycle of $\GFB$ and $w$ a white vertex. The half-ray of $w$ intersects $C$ an odd number of times if $w$ is in the interior of $C$ and an even number of times otherwise.
\end{remark}

We label these intersections and define a pairing of edges the half-ray of $w$ intersects.

\begin{definition}
For a set of edges $X$, if the half-ray of $w$ intersects $e_1$, ..., $e_m$ in that order starting at the point $w$ then $i$ is the \emph{position} of $e_i$.
\end{definition}

If the half-ray of $w$ intersects an even number of $X$ edges, we define a pairing of the intersected edges $p_X(w, e_i) = e_{i+1}$ and $p_X(w, e_{i+1}) = e_i$ for odd $i$. i.e., $p_X$ pairs $e_1 \leftrightarrow e_2$, $e_3 \leftrightarrow e_4$, ...

If the half-ray of $w$ intersects an odd number of $X$ edges, we define a pairing of the intersected edges $p_X(w, e_i) = e_{i \oplus 1}$ for $i \geq 2$. i.e., $p_X$ pairs $e_2 \leftrightarrow e_3$, $e_4 \leftrightarrow e_5$, ...

See Figure~\ref{fig:rays}.

\begin{figure}[t]
\centering
\begin{tikzpicture}[x=1cm,y=1cm]
\draw[m2A] (0.0,0.0) -- (2.4,0.0);
\draw[m1A] (2.4,0.0) -- (5.0,0.0);
\draw[m2A] (5.0,0.0) -- (5.0,2.0);
\draw[m1A] (5.0,2.0) -- (5.0,4.0);
\draw[m2A] (5.0,4.0) -- (2.2,4.0);
\draw[m1A] (2.2,4.0) -- (0.0,4.0);
\draw[m2A] (0.0,4.0) -- (0.0,3.2);
\draw[m1A] (0.0,3.2) -- (0.0,2.5);
\draw[m2A] (0.0,2.5) -- (2.0,2.5);
\draw[m1A] (2.0,2.5) -- (3.5,2.5);
\draw[m2A] (3.5,2.5) -- (3.5,1.5);
\draw[m1A] (3.5,1.5) -- (2.0,1.5);
\draw[m2A] (2.0,1.5) -- (0.0,1.5);
\draw[m1A] (0.0,1.5) -- (0.0,0.0);
\node[fv] at (0.0,0.0) {};
\node[bv] at (2.4,0.0) {};
\node[fv] at (5.0,0.0) {};
\node[bv] at (5.0,2.0) {};
\node[fv] at (5.0,4.0) {};
\node[bv] at (2.2,4.0) {};
\node[fv] at (0.0,4.0) {};
\node[bv] at (0.0,3.2) {};
\node[fv] at (0.0,2.5) {};
\node[bv] at (2.0,2.5) {};
\node[fv] at (3.5,2.5) {};
\node[bv] at (3.5,1.5) {};
\node[fv] at (2.0,1.5) {};
\node[bv] at (0.0,1.5) {};
\draw[rayE] (1.5,0.7) -- (1.5,4.8);
\node[wv] at (1.5,0.7) {};
\node[lbl,below=2pt] at (1.5,0.7) {$w$};
\node[xing] at (1.5,1.5) {};
\node[lbl,anchor=south west,inner sep=1pt] at (1.55,1.53) {$e_1$};
\node[xing] at (1.5,2.5) {};
\node[lbl,anchor=south west,inner sep=1pt] at (1.55,2.53) {$e_2$};
\node[xing] at (1.5,4.0) {};
\node[lbl,anchor=south west,inner sep=1pt] at (1.55,4.03) {$e_3$};
\draw[rayE] (2.5,2.0) -- (2.5,4.8);
\node[wv] at (2.5,2.0) {};
\node[lbl,below=2pt] at (2.5,2.0) {$w'$};
\node[xing] at (2.5,2.5) {};
\node[lbl,anchor=south west,inner sep=1pt] at (2.55,2.53) {$e_1$};
\node[xing] at (2.5,4.0) {};
\node[lbl,anchor=south west,inner sep=1pt] at (2.55,4.03) {$e_2$};
\node[note,anchor=west] at (5.5,3.4) {$w$ inside $C$: $e_1$ alone,};
\node[note,anchor=west] at (5.5,3.0) {$e_2 \leftrightarrow e_3$};
\node[note,anchor=west] at (5.5,2.2) {$w'$ outside $C$:};
\node[note,anchor=west] at (5.5,1.8) {$e_1 \leftrightarrow e_2$};
\end{tikzpicture}
\caption{$\orient(M_1, M_2)$ and half-rays from white vertices intersecting $M_1 \cup M_2$.}
\label{fig:rays}
\end{figure}

\subsection{Signed objects}

Now we define the sign $\sgn: \cO \to \mathbb Z_2$ of each object.

\begin{definition}
For an object $o = (M_1, M_2, (w_1, e_2)) \in \cO$,
\[ \sgn(M_1, M_2, (w_1, e_2)) = \cw(M_1, M_2) + \dir(M_2, w_1, e_2). \]
\end{definition}

Let $\cO^+ = \{o \mid o \in \cO, \sgn(o) = 0\}$, $\cO^- = \{o \mid o \in \cO, \sgn(o) = 1\}$.

To help break symmetry, fix a total order on the cycles of $\aux(G)$. The \emph{first cycle} with some property is the earliest cycle in this order with that property.

\subsection{Involutions}

The two involutions are defined as follows.

\begin{definition}
$\iota_1: \cO \to \cO$ is defined as
\begin{itemize}
\item $\iota_1(M_1, M_2, x) = (M_1, M_2, x)$ if $H - M_2$ contains no cycles
\item $\iota_1(M_1, M_2, x) = (M_1 \symd C, M_2, x)$ otherwise, where $C$ is the first cycle of $H - M_2$.
\end{itemize}
\end{definition}

$\iota_2$ is a bit more complex.

\begin{definition}
$\iota_2: \cO \to \cO$ is defined as follows.

For $o = (M_1, M_2, (w_1, e_2))$, let $C$ be the cycle of $H[M_1 \cup M_2]$ containing $e_2$.

$\iota_2$ is defined by the first of the four following conditions that applies.

\textbf{If some cycle of $H[M_1 \cup M_2]$ doesn't contain $e_2$}, $\iota_2(M_1, M_2, (w_1, e_2)) = (M_1 \symd C', M_2 \symd C', (w_1, e_2))$ where $C'$ is the first cycle of $H[M_1 \cup M_2]$ that doesn't contain $e_2$.

\textbf{If $w_1$ is not in the interior of $C$}, then
\begin{itemize}
\item $\iota_2(M_1, M_2, (w_1, e_2)) = (M_1, M_2, (w_1, p_C(w_1, e_2)))$ if $p_C(w_1, e_2) \in M_2$, and
\item $\iota_2(M_1, M_2, (w_1, e_2)) = (M_1 \symd C, M_2 \symd C, (w_1, p_C(w_1, e_2)))$ if $p_C(w_1, e_2) \in M_1$
\end{itemize}

\textbf{If $i_C(w_1, e_2) = 1$}, $\iota_2(M_1, M_2, (w_1, e_2)) = (M_1, M_2, (w_1, e_2))$

\textbf{If $i_C(w_1, e_2) > 1$} then
\begin{itemize}
\item $\iota_2(M_1, M_2, (w_1, e_2)) = (M_1, M_2, (w_1, p_C(w_1, e_2)))$ if $p_C(w_1, e_2) \in M_2$, and
\item $\iota_2(M_1, M_2, (w_1, e_2)) = (M_1 \symd C, M_2 \symd C, (w_1, p_C(w_1, e_2))) = (M_2, M_1, (w_1, p_C(w_1, e_2)))$ if $p_C(w_1, e_2) \in M_1$.
\end{itemize}
\end{definition}

We now need to prove that $\iota_1$ and $\iota_2$ are well defined and are indeed involutions.

\subsection{Properties of the involutions}

By construction, $\iota_1$ and $\iota_2$ are involutions, $\iota_1$ fixes exactly the objects $(M_1, M_2, x)$ where $M_2$ is cycle-breaking, and $\iota_2$ fixes exactly the objects $(M_1, M_2, (w_1, e_2))$ where $M_1 \cup M_2$ contains a single cycle $C$, $w_1$ is inside $C$ and $e_2$ is the first edge of $C$ the half-ray of $w_1$ meets.

It remains to check that $\iota_1$ and $\iota_2$ are sign-reversing and all objects $\iota_2$ fixes have the same sign. Checking $\iota_1$ and $\iota_2$ are sign-reversing is fairly routine and we leave it to Appendix~\ref{sign-reversing}. We now check $\iota_1$ is well defined and all objects $\iota_2$ fixes have the same sign.

\subsection{Proof of well definedness of $\iota_1$}

In the definition of $\iota_1$, we wrote $(M_1 \symd C, M_2, x)$ where $C$ is the first cycle of $H - M_2$. But we need to ensure $M_1 \symd C$ is still a perfect matching. We do this by showing $C$ alternates between $M_1$-edges and non-$M_1$-edges.

All cycles of $H - M_2$ alternate between black and face-vertices. Each black vertex $b$ has degree at most 2 in $H - M_2$ since $M_2$ is a perfect matching. Since $M_1$ is also a perfect matching, $b$ is incident to exactly one $M_1$-edge and therefore exactly one non-$M_1$-edge in $C$. Again since $M_1$ is a perfect matching from the face-vertex side, $C$ alternates between these edges.

\subsection{Fixed points of $\iota_2$}

We now prove this lemma.

\begin{lemma}\label{i2-fix}
Objects $\iota_2$ fixes all have the same sign.
\end{lemma}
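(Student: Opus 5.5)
The plan is to describe the fixed points of $\iota_2$ explicitly and then compute both summands of $\sgn$ for them. By the remark after the definition of the involutions, $\iota_2$ fixes $o=(M_1,M_2,(w_1,e_2))$ exactly when three things hold. First, $H[M_1\cup M_2]$ is a single cycle $C$; this is the case $c_{\min}=1$, so $C$ is a Hamiltonian cycle of $H$. Second, $w_1$ lies in the interior of $C$. Third, $e_2\in M_2$ is the first edge of $C$ met by the half-ray of $w_1$, i.e.\ it has position $1$. So I will show that every such $o$ has $\sgn(o)=0$, i.e.\ $o\in\cO^+$.

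\emph{Step 1: compute $\cw$.} Here $\orient(M_1,M_2)$ is the single directed cycle $C$, so $\cw'(M_1,M_2)=\pm 1$. If $C$ is clockwise, $\cw'=1$ and $\cw=\lfloor 1/2\rfloor \bmod 2 = 0$. If $C$ is counterclockwise, $\cw'=-1$ and $\cw=\lfloor -1/2\rfloor \bmod 2 = (-1)\bmod 2 = 1$. So $\cw(M_1,M_2)=0$ exactly when $C$ is oriented clockwise.

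\emph{Step 2: compute $\dir$.} This is the step that needs care. The vertices are in general position, so the half-ray from $w_1$ meets $C$ only in interior points of edges and crosses transversally there. Since $w_1\in\Int(C)$, the segment of the ray from $w_1$ up to the first crossing lies in the interior. Hence at the crossing point $x$ on $e_2$, the ray passes from $\Int(C)$ to the exterior: locally, the interior is just below $x$ and the exterior just above.

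Now use the standard fact that for a clockwise Jordan curve the interior lies to the right of the direction of travel, and for a counterclockwise curve to the left. If $C$ is clockwise, the interior is on the right of the direction of travel at $x$ and the interior is below, so $e_2$ is traversed from left to right, giving $\dir=0$. If $C$ is counterclockwise, the same reasoning gives right to left, so $\dir=1$.

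The main point to verify carefully is this orientation and handedness convention. It must match the definitions of ``clockwise'' and ``left to right'' used for $\cw'$ and $\dir$. It is also important that $e_2$ carries its direction from $\orient(M_1,M_2)$, namely face-vertex to black vertex, which is its direction along the directed cycle $C$.

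\emph{Step 3: combine.} In both cases $\sgn(o)=\cw+\dir\equiv 0+0$ or $1+1\equiv 0 \pmod 2$, so every fixed point lies in $\cO^+$. Nonemptiness of the fixed-point set is not part of this lemma, but it follows easily. $H$ has a Hamiltonian cycle, and some white vertex lies inside it, since the face-vertices of $H$ together with the black vertices surround white vertices. Taking the first edge hit by that vertex's half-ray and choosing the labelling so that this edge is in $M_2$ gives a fixed point.
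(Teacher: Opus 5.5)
Your proof is correct and follows the same route as the paper. You characterize the fixed points as a single cycle $C$ with $w_1$ inside and $e_2$ the first crossing, then observe that $e_2$ crosses the half-ray left to right exactly when $C$ is clockwise, so $\cw$ and $\dir$ flip together; you also make explicit that $\cw'=\pm1$ gives $\cw=0$ or $1$ respectively, placing every fixed point in $\cO^+$, which the paper leaves implicit.
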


\begin{proof}

$\iota_2$ fixes exactly the objects $(M_1, M_2, (w_1, e_2))$ where $M_1 \cup M_2$ contains a single cycle $C$, $w_1$ is inside $C$ and $e_2$ is the first edge of $C$ the half-ray of $w_1$ meets.

Since $w_1$ is inside $C$ and $e_2$ is the first edge of $C$ on the half-ray of $w_1$, $e_2$ crosses this half-ray from left to right if $C$ is clockwise in $\orient(M_1, M_2)$ and from right to left otherwise.

Thus in both cases either both $\cw$ and $\dir$ are flipped or neither is. So all such objects have the same sign.
\end{proof}

\subsection{Proof of Lemma~\ref{disk-lemma}}

We now prove Lemma~\ref{disk-lemma}. We'll split $\cw(M_1, M_2)$ into a sum of values on vertices so we can determine the change on a single cycle by looking at the change on vertices in that cycle.

To that end, for $b \in B - \{b_0\}$, let $\tau(M_1, M_2, b) = 1$ if on the face of $\GFB$ containing both the $M_1$-edge and $M_2$-edge incident to $b$, we rotate clockwise to get from the $M_2$-edge to the $M_1$-edge and 0 otherwise. Note such a face always exists since $b$ has degree 3.

Swapping along a cycle flips $\tau$ exactly at its black vertices. We must still control the number of 0 to 1 flips and 1 to 0 flips. We do this now.

Let $C$ be a cycle of $H$ that alternates between $M_1$-edges and non-$M_1$-edges. This holds for cycles of $H - M_2$ and for cycles of $H[M_1 \cup M_2]$. Each black vertex $b$ on $C$ has two edges of $\GFB$ on $C$ and one more, its \emph{third edge}. So let

\begin{itemize}
\item $t_{C, \mathrm{in}}$ be the number of black vertices on $C$ whose third edge is inside $C$, and
\item $t_{C, \mathrm{out}}$ be the number of black vertices on $C$ whose third edge is outside $C$.
\end{itemize}

For a cycle $C$ of $H - M_2$ the third edge is the $M_2$-edge. For a cycle $C$ of $H[M_1 \cup M_2]$ it is the edge to the third face, which may be a root face (and is then outside $C$).

\begin{claim}\label{tin-tout}
$t_{C, \mathrm{in}} = |C|/4 - 3/2$ and $t_{C, \mathrm{out}} = |C|/4 + 3/2$
\end{claim}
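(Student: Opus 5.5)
The plan is to prove $t_{C,\mathrm{in}}+t_{C,\mathrm{out}}=|C|/2$ and one more linear relation, using Euler's formula on the disk bounded by $C$ in $\aux(G)$. The extra ingredient is that $C$ alternates with respect to the perfect matching $M_1$ of $H$.

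The first relation is immediate. $C$ alternates between black and face-vertices, so it has $|C|/2$ black vertices, and each of them is counted in exactly one of $t_{C,\mathrm{in}}, t_{C,\mathrm{out}}$.

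For the second relation, work in the triangulation $\aux(G)$. Every face of $\aux(G)$ is a triangle $(f,b,w)$ with exactly one edge of each type $BW$, $BF$, $WF$. Let $D=\Int(C)$ in $\aux(G)$, a triangulated disk with boundary length $|C|$. Let $T$ be its number of triangles, and let $i_b,i_w,i_f$ be the numbers of black, white and face-vertices strictly inside $C$.

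\begin{enumerate}
\item \emph{Euler's formula.} The disk relations $V-E+T=1$ and $2E=3T+|C|$ give $i_b+i_w+i_f = 1+T/2-|C|/2$.
\item \emph{Counting from white vertices.} No $BW$ edge lies on $C$, so every $BW$ edge of $D$ lies in two triangles, giving $T=2e_{BW}$. Every white vertex of $D$ is strictly inside $C$ and has its three black neighbours in $D$, so $e_{BW}=3i_w$ and $T=6i_w$.
\item \emph{Counting $BW$ edges from black vertices.} Around a black $b$ the neighbours in $\aux(G)$ alternate $f,w,f,w,f,w$. If $b\in C$, two of its faces are its $C$-neighbours. The inside wedge at $b$ contains two whites if the third edge is inside and one white otherwise. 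Hence $e_{BW}=3i_b+2t_{C,\mathrm{in}}+t_{C,\mathrm{out}}$.
\item \emph{The key extra equation $i_b=i_f$.} First I check that every vertex of $\GFB$ strictly inside $C$ is a vertex of $H$. The vertex $b_0$ lies outside $C$, since it is adjacent to the outer face-vertex. Any other face $f\in N(b_0)$ cannot be strictly inside $C$, since the edge $b_0f$ would cross $C$. It cannot lie on $C$ either, since $f\notin V(H)$. Next, every vertex of $C$ is matched by an $M_1$-edge of $C$, because $C$ alternates with respect to $M_1$. So $M_1$ restricted to the strict interior is a perfect matching between interior black vertices and interior face-vertices, using planarity to keep interior edges inside.
\end{enumerate}

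Now eliminate the unknowns. From (2) and (3), $i_w=i_b+(2t_{C,\mathrm{in}}+t_{C,\mathrm{out}})/3$. Substitute this together with $i_f=i_b$ and $T=6i_w$ into (1). The $i_b$ terms cancel and leave
\[ (4t_{C,\mathrm{in}}+2t_{C,\mathrm{out}})/3 = |C|/2-1. \]
Combining with $t_{C,\mathrm{in}}+t_{C,\mathrm{out}}=|C|/2$ gives $t_{C,\mathrm{in}}=|C|/4-3/2$ and $t_{C,\mathrm{out}}=|C|/4+3/2$.

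I expect step (4) to be the main obstacle, because the identity is false without it. For example, the link of a single face-vertex $f$ in $\GFB$ has $t_{C,\mathrm{in}}$ equal to the whole half-length of the cycle. What rules this case out is precisely that such an $f$ could not be $M_1$-matched. The hypothesis that $C$ alternates with respect to the perfect matching $M_1$, for cycles of $H-M_2$ and of $H[M_1\cup M_2]$ alike, is therefore essential. The care needed is in confirming that no vertex of $N[b_0]$ lies inside $C$ and that the interior matching is genuinely perfect.
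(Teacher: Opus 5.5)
Your proof is correct and is essentially the paper's argument: Euler's formula on the disk bounded by $C$, with $M_1$ forcing equally many black and face-vertices strictly inside, and a black-side edge count that separates $t_{C,\mathrm{in}}$ from $t_{C,\mathrm{out}}$. The only difference is that you work in the triangulation $\aux(G)$, where the interior white vertices are vertices, whereas the paper works in $H_C\subseteq\GFB$, where those same white vertices are the hexagonal inner faces.

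One small slip in your closing aside, which does not affect the proof. For the link of a face-vertex with $k$ black neighbours, $|C|=4k$, and only the $k$ blacks on that face have their third edge inside. So $t_{C,\mathrm{in}}=|C|/4$ rather than $|C|/2$, though your point that the identity fails there (because $i_f=i_b+1$) still stands.
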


\begin{proof}
Let $H_C$ be the subgraph of $\GFB$ induced by all vertices and edges inside $\Int(C)$, including its boundary. We will apply Euler's formula to $H_C$.

Let $\beta$ be the number of black vertices in $V(H_C)$. That is, $\beta = |B \cap V(H_C)|$

All vertices inside $C$ are in $H$, since $b_0$ is on the outer face and each root face is joined to $b_0$ by an edge that cannot cross $C$. Since $M_1$ is a perfect matching of $H$, $M_1$ matches all vertices in $H_C$ to each other (since no edge of $M_1$ crosses $C$ and it already matches all vertices of $C$ by choice of $C$). So $|V(H_C)| = 2 \beta$.

Every edge of $\GFB$ has exactly one black end. Since $G$ is cubic, we count all edges of $H_C$ from the black vertex side, giving $|E(H_C)| = 3 \beta - t_{C, \mathrm{out}}$.

$H_C$ has one outer face of length $|C|$ and a number of inner faces, each of length 6 (each face of $\GFB$ surrounds one white vertex $w$ and is bounded by the 3 black neighbours of $w$ and the 3 faces at $w$). As each edge is on two faces, $|F(H_C)| = 1 + (2|E(H_C)| - |C|) / 6$

By Euler's formula,

\begin{align*}
|V(H_C)| + |F(H_C)| - |E(H_C)| &= 2 \\
2 \beta + 1 + |E(H_C)| / 3 - |C|/6 - |E(H_C)| &= 2 \\
2 \beta - |C|/6 - (2/3)|E(H_C)| &= 1 \\
2 \beta - |C|/6 - (2/3)(3 \beta - t_{C, \mathrm{out}}) &= 1 \\
- |C|/6 + (2/3) t_{C, \mathrm{out}} &= 1 \\
- |C|/4 + t_{C, \mathrm{out}} &= 3/2 \\
t_{C, \mathrm{out}} &= |C|/4 + 3/2
\end{align*}

Since $t_{C, \mathrm{in}} + t_{C, \mathrm{out}} = |B \cap C| = |C|/2$, we have $t_{C, \mathrm{in}} = |C|/4 - 3/2$
\end{proof}

We now show $\cw'$ is the sum of all taus up to a constant depending only on $H$ and a multiplicative factor.

\begin{claim}
\[ \cw'(M_1, M_2) = |V(H)|/6 - (2/3)\sum_b \tau(M_1, M_2, b) \]
\end{claim}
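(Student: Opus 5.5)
The plan is to compute $\sum_b \tau(M_1,M_2,b)$ one cycle at a time and then apply Claim~\ref{tin-tout}. Since $M_1$ and $M_2$ are perfect matchings of $H$, $\orient(M_1,M_2)$ is a disjoint union of directed cycles covering $V(H)$. Every black vertex $b \in B-\{b_0\}$ lies on exactly one such cycle $C$, and its $M_1$-edge and $M_2$-edge are the two edges of $C$ at $b$. Each such cycle alternates between $M_1$- and non-$M_1$-edges, so Claim~\ref{tin-tout} applies. Its third edge is outside $C$ whenever it goes to a root face. So I would write $\sum_b \tau = \sum_C \sum_{b \in C \cap B} \tau(M_1,M_2,b)$ and evaluate each inner sum.

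The local step is the following. Fix a black $b$ on $C$. The three $\GFB$-edges at $b$ split a small disk around $b$ into three angular sectors. The face of $\GFB$ containing both the $M_1$-edge and the $M_2$-edge at $b$ is the sector between these two edges that avoids the third edge. This sector therefore lies on the side of $C$ opposite the third edge: it is inside $C$ exactly when the third edge is outside $C$. In $\orient(M_1,M_2)$ the walk enters $b$ along the $M_2$-edge (face-vertex to black) and leaves along the $M_1$-edge. If $C$ is traversed clockwise, the interior lies on the right of the walk. I would then check, with a single picture at a vertex, which of the two sectors is swept when rotating clockwise from the $M_2$-edge to the $M_1$-edge. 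The conclusion I expect, which is the one consistent with the stated formula, is the following. For a clockwise cycle, $\tau(b)=1$ exactly when the rotating sector is the exterior one, i.e.\ when the third edge is inside $C$. For a counterclockwise cycle the roles swap: $\tau(b)=1$ exactly when the third edge is outside $C$.

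Granting this, a clockwise cycle contributes $t_{C,\mathrm{in}} = |C|/4 - 3/2$ to $\sum_b\tau$, and a counterclockwise cycle contributes $t_{C,\mathrm{out}} = |C|/4 + 3/2$. Summing over all cycles and using $\sum_C |C| = |V(H)|$ gives
\[ \sum_b \tau(M_1,M_2,b) = \frac{|V(H)|}{4} - \frac32\,\cw'(M_1,M_2). \]
Solving for $\cw'$ yields $\cw'(M_1,M_2) = |V(H)|/6 - (2/3)\sum_b \tau(M_1,M_2,b)$.

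The main obstacle is the local orientation check: deciding correctly which rotation sense from the $M_2$-edge to the $M_1$-edge corresponds to the interior sector for a clockwise cycle. Everything else is bookkeeping on top of Claim~\ref{tin-tout}. I would settle it with the concrete example of Figure~\ref{fig:rays}, reading off one vertex of a clockwise cycle. If the sign convention came out reversed, the result would be $\cw' = -|V(H)|/6 + (2/3)\sum_b\tau$ instead. That version would serve equally well for Lemma~\ref{disk-lemma}, but I would recheck it against the figure to match the stated formula.
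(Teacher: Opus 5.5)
Your proposal is the paper's proof. You decompose $\sum_b\tau$ over the directed cycles of $\orient(M_1,M_2)$, use the local table ($\tau(b)=1$ iff the third edge is inside for a clockwise cycle, outside for a counterclockwise one), apply Claim~\ref{tin-tout}, and sum using $\sum_D|D|=|V(H)|$. The local check you left open, which the paper only tabulates, comes out as you predicted. On a clockwise traversal the interior is on the right, i.e.\ in the counterclockwise sweep from the incoming $M_2$-edge to the outgoing $M_1$-edge, so the clockwise sweep is exterior. Hence $\tau(b)=1$ exactly when the face of $\GFB$ between these two edges is exterior, i.e.\ when the third edge is inside $C$, and the reverse holds for counterclockwise cycles.
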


\begin{proof}
Let $D$ be a cycle of $\orient(M_1, M_2)$ and $\beta_D = |B \cap D|$. At each black vertex $b$ of $D$, $D$ enters along the $M_2$-edge of $b$ and leaves along the $M_1$-edge of $b$. The four possible values of $\tau$ are thus.

\begin{center}
\begin{tabular}{c|cc}
$\tau$ & third edge outside $D$ & third edge inside $D$ \\
\hline
clockwise & 0 & 1 \\
counterclockwise & 1 & 0 \\
\end{tabular}
\end{center}

So $\sum_{b \in D} \tau(M_1, M_2, b)$ is $t_{D, \mathrm{in}}$ if $D$ is clockwise and $t_{D, \mathrm{out}}$ if $D$ is counterclockwise.

By the Claim~\ref{tin-tout}, $\sum_{b \in D} \tau(M_1, M_2, b)$ is $|D|/4 - 3/2$ if $D$ is clockwise and $|D|/4 + 3/2$ otherwise.

Each black vertex is on exactly one cycle of $\orient(M_1, M_2)$ so summing over all cycles

\[ \sum_b \tau(M_1, M_2, b) = |V(H)|/4 - (3/2) \#\cw + (3/2) \#\mathrm{ccw} = |V(H)|/4 - (3/2) \cw'(M_1, M_2) \]

where $\#\cw$ and $\#\mathrm{ccw}$ are the number of clockwise and counterclockwise cycles of $\orient(M_1, M_2)$
\end{proof}

\begin{proof}[Proof of Lemma~\ref{disk-lemma}]
Swapping $M_1$ along $C$ flips $\tau$ on the $|C|/2$ black vertices of $C$ and nowhere else. The total change is $\pm(t_{C, \mathrm{in}} - t_{C, \mathrm{out}}) = \pm3$. So $\cw'$ changes by $-(2/3)(\pm3) = \pm2$. So $\cw$ flips.
\end{proof}

\subsection{Completing the proof of the Key Lemma}

\begin{proof}[Proof of Lemma~\ref{lem:matching}]
By Lemmas~\ref{i1-sign},~\ref{i2-sign}, $\iota_1$ and $\iota_2$ are sign-reversing involutions of $\cO$. By Lemma~\ref{i2-fix}, objects $\iota_2$ fixes all have the same sign. By Lemma~\ref{two-factor}, there is a 2-factor of $H$ and therefore an ordered 2-factor $(M_1, M_2)$ of $H$ that is the union of $c_{\min}$ cycles.

Recall $c_{\min} = 1$ (with $c_{\min} > 1$ handled in Appendix~\ref{sec:large-cmin}). Let $w_1$ be any white vertex inside the only cycle of $M_1 \cup M_2$, let $e_2$ be the first edge of $M_1 \cup M_2$ the half-ray of $w_1$ intersects. We may assume $e_2 \in M_2$ (otherwise swap the labels $M_1$ and $M_2$). Let $o = (M_1, M_2, (w_1, e_2))$

Then by definition, $\iota_2$ fixes $o$ so the objects $\iota_2$ fixes are non-empty. So $|\cO^+| \neq |\cO^-|$ as $\iota_2$ is sign-reversing, matching opposite signed objects. So $\iota_1$ has a fixed point $(M_1', M_2', x')$, and $(M_1', M_2')$ is a cycle-breaking 2-factor.
\end{proof}

\section{From 2-factor to spanning trees}\label{sec:spanning-tree}

Having proven the Key Lemma, it remains to show that the 2-factor of $G$ we build from it is indeed Hamiltonian. Recall from the proof sketch that we do this in three steps.

\begin{enumerate}
\item Find a cycle-breaking 2-factor $(M_1, M_2)$ and take the dual $M_2^*$ of $M_2$ and see that it is a spanning tree $T_W$ of white vertices in $\GFB^*$.
\item ``Trace around'' $T_W$ to get a spanning tree of $G - b_0$.
\item Remove all outgoing edges from white vertices along with $e_0$ to get a Hamiltonian cycle of $G$.
\end{enumerate}

In this section, we prove Lemma~\ref{lem:spanning-tree} which is the last property we need for the first step.

\begin{restatement}[Lemma]{lem:spanning-tree}
Let $(G, B, W, b_0w_0)$ be a rooted Barnette graph, $H = \GFB - N[b_0]$ and $(M_1, M_2)$ a cycle-breaking 2-factor of $H$.

Then $\GFB - M_2$ is a spanning tree of $\GFB$.
\end{restatement}

\begin{proof}
We now show $\GFB - M_2$ is acyclic. Then by counting vertices and edges of $\GFB - M_2$, we conclude $\GFB - M_2$ is a spanning tree.

Suppose for a contradiction $\GFB - M_2$ has a (minimal) cycle $C$. Since $H - M_2$ is acyclic and $\GFB[N[b_0]]$ is acyclic, the part of $C$ in $H$ is a path $P_1$ between two black vertices that are neighbours of a (possibly the same) root vertex. Let $P$ be the subpath of $P_1$ where we remove the two root vertices at the end.

$P$ is contained entirely in $H$, starts and ends at a black vertex and alternates black and face-vertices. So $P$ has one more black vertex than face-vertex.

Since black vertices have degree 3 in $G$, they have degree 1 or 2 in $H - M_2$ depending on whether they're adjacent to a root vertex in $\GFB$. So all (face) neighbours of black vertices of $P$ are also on $P$. But $P$ has one more black vertex than face-vertex so $M_1$ can't match all black vertices on $P$. A contradiction to $M_1$ being a matching.
\end{proof}

\begin{corollary}\label{cor:spanning-tree}
Let $(G, B, W, b_0w_0)$ be a rooted Barnette graph, $H = \GFB - N[b_0]$ and $(M_1, M_2)$ a cycle-breaking 2-factor of $H$.

Then $M_2^*$ is a spanning tree of white vertices.
\end{corollary}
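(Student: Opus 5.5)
The plan is to derive Corollary~\ref{cor:spanning-tree} from Lemma~\ref{lem:spanning-tree} by planar tree--cotree duality. By Lemma~\ref{lem:spanning-tree}, $T_{FB} = \GFB - M_2$ is a spanning tree of $\GFB$. We view $\GFB$ as a connected plane graph in the fixed drawing inherited from $\aux(G)$, whose faces correspond to $W$. In the planar dual $\GFB^*$ the face-vertices are exactly the white vertices of $G$. Two white vertices are adjacent across an edge $bf$ of $\GFB$ when $bf$ lies on the boundary of both of their faces. We then invoke the standard fact that for a connected plane graph, the complement of a spanning tree is dual to a spanning tree of the dual. Applied to $T_{FB}$, this shows that the duals of the edges not in $T_{FB}$, namely $M_2^* = \{e^* : e \in M_2\}$, form a spanning tree of $\GFB^*$, whose vertex set is $W$.

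To make this self-contained, the steps are as follows. First, I would check that $\GFB$ is connected and that its faces are in bijection with $W$, as noted in the introduction: each face is a hexagon $b_1 f_1 b_2 f_2 b_3 f_3$ around a white vertex. Second, I would check that each edge $bf$ of $\GFB$ borders two distinct faces, i.e.\ there are no bridges. The edge $bf$ corresponds to the angle at $b$ in face $f$, which is flanked by the two white neighbours of $b$ on $f$, and these are distinct. Hence the dual edge joins two distinct white vertices and $M_2^*$ has no loops.

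Third comes the counting step. Write $n_B = |B|$, $n_W = |W|$ and $n_F = |F(G)|$, and use Euler's formula $|V(\GFB)| - |E(\GFB)| + n_W = 2$, where $|E(\GFB)| = 3n_B$. The tree $T_{FB}$ has $n_B + n_F - 1$ edges, so $|M_2| = 3n_B - (n_B + n_F - 1) = n_W - 1$. It therefore suffices to show $M_2^*$ is acyclic, or equivalently connected.

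For acyclicity, I would use the cycle/cut duality. A cycle in $M_2^*$ is a closed curve through white vertices crossing only edges of $M_2$. By the Jordan curve theorem it separates the vertices of $\GFB$, and since $T_{FB}$ is connected it has vertices on both sides. Some edge of $T_{FB}$ must then cross the curve. But the curve crosses $\GFB$ only along edges dual to $M_2$, which are not in $T_{FB}$, a contradiction.

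The only delicate point is the second step, making sure the dual is well defined without loops, together with the separation argument. This is routine given the general-position straight-line embedding of $\aux(G)$. Realizing the dual edges as curves $w\,b\,w'$ through the black vertex $b$ of the primal edge $bf$ keeps everything inside $\aux(G)$. This realization also matches the later ``trace'' construction.
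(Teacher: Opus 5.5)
Your proposal is correct and follows the same route as the paper, which obtains the corollary directly from Lemma~\ref{lem:spanning-tree} by tree--cotree duality in the plane graph $\GFB$ (whose faces correspond to the white vertices); your counting and Jordan-curve steps just spell out that standard fact. One small tightening: both sides of a dual cycle contain vertices of $\GFB$ because the curve crosses each of its $M_2$-edges transversally exactly once (connectivity of $T_{FB}$ is only needed afterwards), so in that step draw dual edges crossing $bf$ at an interior point rather than through $b$.
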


\begin{proof}
Since $\GFB$ is planar, the planar dual of the complement of $T_{FB}$ is a spanning tree $T_W$ of white vertices by tree-cotree decomposition.
\end{proof}

\section{From white spanning tree to Hamiltonian cycle in $G$}\label{sec:ham}

Let $T_W$ be the dual complement of $T_{FB}$. It is a spanning tree of $\GFB^*$.

For this section, we work in both $\aux(G)$ and $G^*$.

\begin{definition}
The \emph{black dual trace} of an edge $b_1w_1 \in E(\aux(G))$, $b_1 \in B$, $w_1 \in W$ is the set of two edges $\{b_1f_1, b_1f_2\}$ where
\begin{itemize}
\item $f_1$ is the next clockwise neighbour of $b_1$ after $w_1$, and
\item $f_2$ is the next counterclockwise neighbour of $b_1$ after $w_1$.
\end{itemize}

The black dual trace of a set of edges is the union of the black dual trace of each edge.
\end{definition}

A black dual trace can contain an edge twice, but we only trace matchings, which never produce duplicates.

The black dual trace of a set of edges between $B$ and $F(G)$ in $\aux(G)$ is defined similarly, exchanging the roles of $W$ and $F(G)$.

\begin{definition}
Call neighbours of $b_0$ that are face-vertices \emph{root face-vertices}.
\end{definition}

Let $f_{\mathrm{out}}$ be the root face-vertex representing the outer face of $G$ in $\aux(G)$.

\begin{lemma}\label{lem:ham}
Let $(G, B, W, b_0w_0)$ be a rooted Barnette graph, $H = \GFB - N[b_0]$ and $M_2$ a perfect matching of $H$ where $H - M_2$ is acyclic and $T_W = M_2^*$ is a spanning tree of white vertices.

Let $T_{\vec W}$ be the oriented spanning tree of $G - b_0$ obtained by taking the black dual trace of $T_W$ and orienting all edges towards $w_0$.

Let $P$ be the set of all outgoing edges from white vertices in $T_{\vec W}$ together with $e_0$.

Then $E(G) - P$ induces a Hamiltonian cycle of $G$.
\end{lemma}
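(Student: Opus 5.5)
The plan is to first establish the combinatorial structure of $T_{\vec W}$ and $P$, then show that $E(G)-P$ is a 2-factor, and finally rule out that this 2-factor has more than one cycle.

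\textbf{Step 1: $T_{\vec W}$ is a spanning tree of $G-b_0$.} Each edge of $T_W=M_2^*$ is dual to an $M_2$-edge $bf$ with $b\neq b_0$. In $\aux(G)$ the neighbours of $b$ alternate white, face, white, face, white, face. So the black dual trace of $bf$, with the roles of $W$ and $F(G)$ exchanged, is the path $w\,b\,w'$ in $G$, where $w,w'$ are the two white neighbours of $b$ flanking $f$. These are exactly the two white faces of $\GFB$ separated by $bf$, i.e.\ the ends of the dual edge. Since $M_2$ is a matching, distinct edges of $T_W$ use distinct black vertices, so no edge is traced twice. The trace therefore has $2(|W|-1)=2|B|-2$ edges on the vertex set $W\cup(B-\{b_0\})$, which has $2|B|-1$ vertices. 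It is connected because $T_W$ is connected, so it is a spanning tree of $G-b_0$ in which every black vertex has degree exactly $2$.

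\textbf{Step 2: $E(G)-P$ is a 2-factor.} Orient the tree towards $w_0$. Every white $w\neq w_0$ has exactly one outgoing edge, and it ends at a black vertex. Every black $b\neq b_0$ has degree $2$ in the tree, so it has one incoming edge (from a white child) and one outgoing edge (to its white parent). Hence the white out-edges form a perfect matching between $W-\{w_0\}$ and $B-\{b_0\}$. Adding $e_0=b_0w_0$ makes $P$ a perfect matching of the cubic graph $G$, so $E(G)-P$ is a 2-factor $F$. We record the local picture for later use:
\begin{itemize}
\item each black $b\neq b_0$ has, in $F$, its tree out-edge to its parent together with its unique non-tree edge;
\item each white $w\neq w_0$ has, in $F$, its two edges other than the one to its parent.
\end{itemize}

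\textbf{Step 3: $F$ is a single cycle.} This is the main obstacle. Suppose $F$ has a cycle $Q$ avoiding $b_0$; then $Q$ also avoids $w_0$, since $e_0\in P$. The first thing I would try is a Jordan-curve and tree-path argument. Along $Q$, every black vertex's parent edge lies on $Q$, while every white vertex's parent edge is off $Q$.

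Because $G$ is 3-connected, $Q$ is not all of $G-b_0$, so some vertex of the tree lies strictly on one side of $Q$. In fact both the strict interior and the strict exterior are nonempty; the exterior contains $b_0$ and $w_0$. The parent of a white vertex $w$ on $Q$ is a black vertex reached by an edge of $G$ not on $Q$, so it lies strictly inside or strictly outside $Q$. Call the set of such edges pointing inside $X_{\mathrm{in}}$. Let $U$ be the set of tree vertices strictly inside $Q$.
\begin{itemize}
\item Every vertex of $U$ has its tree path to $w_0$ leaving $\Int(Q)$.
\item A path out of $U$ can only exit through $Q$, and a vertex of $Q$ sends its parent either along $Q$ (black) or across $Q$ (white).
\item Tree edges leaving $\Int(Q)$ therefore exist only at white vertices of $Q$ whose parent edge points outward.
\end{itemize}
Count, in the induced subgraph on $\Int(Q)$, the tree edges against the vertices. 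The tree restricted to $\Int(Q)$ must be a forest with exactly one root per component. The roots are those white vertices of $Q$ whose parent edge points out, and these number $|Q|/2-|X_{\mathrm{in}}|$.

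Euler's formula on $\Int(Q)$, done as in Claim~\ref{tin-tout} with all faces hexagonal, relates $|X_{\mathrm{in}}|$, the number of vertices, and the number of edges inside $Q$. From this I expect to deduce that there are too few or too many tree edges inside $Q$. Either a cycle in the tree or a component inside $Q$ with no exit would follow, contradicting Step~1. If the counting is delicate, I would instead work with the dual picture: the faces of $G$ inside $Q$ are face-vertices of $T_{FB}=\GFB-M_2$. I would show that the edges of $T_{FB}$ induced by the vertices inside $Q$ cannot connect to the outside without crossing $Q$, which contradicts Lemma~\ref{lem:spanning-tree}.
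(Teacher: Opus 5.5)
\textbf{The gap is Step 3.} Your Steps 1 and 2 are correct and match the paper's first claim that $P$ is a perfect matching. Step 3 is the real content of the lemma, and you only sketch it (``I expect to deduce\ldots''). Neither sketch can work as written. Both rely only on $T$ being a spanning tree of $G-b_0$ in which every black vertex has degree $2$, or on $\GFB-M_2$ being a spanning tree. These facts do not imply the conclusion.

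Here is a counterexample. Take the cube, with black vertices of even weight, $b_0=000$ and $w_0=001$. Trace the black vertices $011,101,110$ as $001\,011\,010$, $001\,101\,100$ and $100\,110\,111$. The white pairs flank the faces $x=0$, $y=0$, $x=1$ respectively, so this trace comes from $M=\{011\,f_{x=0},\,101\,f_{y=0},\,110\,f_{x=1}\}$. Both the trace and $\GFB-M$ are spanning trees. Yet the white out-edges towards $w_0$, together with $e_0$, are the four edges in the $z$-direction, so $E(G)-P$ is the two facial cycles $z=0$ and $z=1$. The only hypothesis violated is that $M_2$ is a perfect matching of $H$: $M$ uses the root faces $x=0,y=0$ and misses $y=1,z=1$. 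Any proof must therefore use this hypothesis, and your Step 3 never does.

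Several individual steps of Step 3 are also wrong:
\begin{itemize}
\item The example shows that ``$e_0\in P$'' does not force a cycle avoiding $b_0$ to avoid $w_0$; there $b_0$ and $w_0$ lie on different cycles.
\item The parent edge of a white vertex of $Q$ can be a chord of $Q$. Indeed, every $P$-edge is a chord of the final Hamiltonian cycle.
\item The faces of $G$ inside $Q$ are not hexagons. Claim~\ref{tin-tout} is about cycles of $\GFB$, not about black--white cycles.
\item Your fallback claim is false: $T_{FB}$ passes from one side of $Q$ to the other through every black vertex of $Q$ without crossing $Q$. At such a vertex, $T_{FB}$ contains the edge to the lone face on one side and one of the two faces on the other.
\end{itemize}

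\textbf{The missing idea.} You need to bring $M_2$ into the planar picture via the black dual trace.
\begin{itemize}
\item $\Pbar$ never crosses $\trB(P)$, and $M_2\subseteq\trB(P)$.
\item Orient $M_2$-edges from face to black, $f_{\mathrm{out}}b_0$ towards $b_0$, and all other edges of $\trB(P)$ from black to face. Then every vertex other than a root face-vertex has out-degree exactly $1$. This is exactly where you use that each non-root face-vertex is matched once by $M_2$.
\item $f_{\mathrm{out}}$ and the other face containing $e_0$ lie outside every cycle of $\Pbar$, and the third root face-vertex lies inside at most one. So two cycles would yield an innermost cycle $C_1$ with no root face-vertex in $\Int(C_1)$.
\item Following out-edges from a face-vertex inside $C_1$ stays in $\Int(C_1)$, so it ends in a directed cycle of $\trB(P)$. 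That cycle encloses a white vertex, hence a second cycle of $\Pbar$ inside $C_1$, which is a contradiction.
\end{itemize}
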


\begin{proof}

\begin{claim}
$P$ is a perfect matching of $G$.
\end{claim}

\begin{proof}
Each edge of $T_W$ is the dual of an $M_2$-edge between a black $b$ and a face $f$ and is traced as a path wbw' through $b$. Since $M_2$ matches each black $b \neq b_0$ exactly once, $b$ lies on exactly one such path, and exactly one half of it leaves a white vertex in the orientation $T_{\vec W}$. Each white $w \neq w_0$ has exactly one outgoing edge, and $e_0$ covers $b_0$ and $w_0$. So $P$ is a perfect matching of $G$.
\end{proof}

Since $G$ is cubic, $G - P$ is a 2-factor. Let $\Pbar = E(G) - P$. We abuse notation and also refer to $\Pbar$ as the subgraph of $\aux(G)$ induced by $E(G) - P$. It remains to show $\Pbar$ contains a single cycle.

The cycles of the 2-factor $\Pbar$ and the dual complement $P^*$ don't cross in the planar drawing as their edges are disjoint. So $\Pbar$ and the black dual trace $\trB(P)$ of $P$ also do not cross, though they may intersect at black vertices. That is, the cycles of $\Pbar$ separate $\aux(G)$ into regions (one exterior of all cycles and the interior of each cycle with the interior of cycles within it removed) and every component of $P^*$ is entirely contained in one region.

Suppose for a contradiction that $\Pbar$ contains (at least) two cycles.

\begin{claim}
Some cycle $C$ of $\Pbar$ contains no root face-vertex in $\Int(C)$.
\end{claim}

\begin{proof}
Since $e_0$ is in the outer face of $\aux(G) - f_{\mathrm{out}}$ and $e_0$ is in $P$, $\Ext(\Pbar)$ contains both ends of $e_0^*$ ($f_{\mathrm{out}}$ and another root face-vertex). Let $f_3$ be the third root face-vertex, so $b_0f_3$ is the edge with no endpoint in $e_0^*$. So only $f_3$ can be in the interior of a cycle of $\Pbar$. $f_3$ can only be contained in the interior of one cycle of $\Pbar$ as $b_0f_3$ is a single edge and cannot cross the boundary of two different cycles. As $\Pbar$ has at least two cycles, some cycle $C$ of $\Pbar$ has no root face-vertex in $\Int(C)$.
\end{proof}

Let $C_1$ be such a cycle with no other cycle of $\Pbar$ inside $\Int(C_1)$.

Define an orientation $\vec \trB(P)$ of $\trB(P)$ as follows. Orient every $M_2$-edge from the face-vertex to black vertex. Orient $f_{\mathrm{out}}$ to $b_0$. Orient all remaining edges from the black vertex to face-vertex.

For each black $b \neq b_0$, the edge of $T_W$ dual to its $M_2$-edge, between $b$ and a face $f$, joins the two white neighbours of $b$ on either side of $f$. The $P$-edge at $b$ is one half of that path, so $f$ is next to it around $b$, and the $M_2$-edge is in $\trB(P)$. So $M_2 \subseteq \trB(P)$.

\begin{claim}
Every vertex not in $F(G) \cap N(b_0)$ has outdegree 1 in $\vec \trB(P)$
\end{claim}

\begin{proof}
Indeed, since $M_2$ is a perfect matching of $H$, non-root face-vertices have outdegree 1. Since $P$ is a perfect matching, each black vertex is incident to exactly one edge of $P$ and since this is the dual trace of $P$, all black vertices have outdegree 1.
\end{proof}

Take a face-vertex $f_1$ inside $C_1$ (it exists as $C_1$ is a black-white cycle and thus contains a facial cycle). Starting at $f_1$, repeatedly follow outgoing edges. We either end at a root face-vertex or in a directed cycle of $\vec \trB(P)$.

Our path and $C_1$ don't cross since our path is a subset of $\trB(P)$ and $C_1$ is in $\Pbar$. Since we start in $\Int(C_1)$, the cycle or root face-vertex we end at is also in $\Int(C_1)$.

Since $C_1$ contains no root face-vertex, our path ends at a directed cycle $C_2$. But $C_2$ is a cycle of $\trB(P)$ and contains a white vertex in its interior (any face of $\aux(G)[B \cup F(G)]$). Contradiction as $\Pbar$ spans all white vertices and $C_1$ contains no other cycles and therefore no other white vertices inside its interior.

Therefore $\Pbar$ is a single cycle and hence a Hamiltonian cycle.
\end{proof}

Now the proof of Theorem~\ref{cyclic4} is a routine combination of previous lemmas.

\begin{proof}[Proof of Theorem~\ref{cyclic4}]
By Lemma~\ref{lem:matching}, $\GFB - N[b_0]$ has a cycle-breaking 2-factor $(M_1, M_2)$. By Corollary~\ref{cor:spanning-tree}, $M_2^*$ as dual edges of $\GFB$ is a spanning tree of white vertices of $\GFB^*$.

Construct $P$ from Lemma~\ref{lem:ham}. That is, let $T_{\vec W}$ be the oriented spanning tree of $G - b_0$ obtained by taking the black dual trace of $T_W$ and orienting all edges towards $w_0$. Let $P$ be the set of all outgoing edges from white vertices in $T_{\vec W}$ together with $e_0$.

By Lemma~\ref{lem:ham}, $E(G) - P$ induces a Hamiltonian cycle of $G$ avoiding $e_0 \in P$.
\end{proof}

\textbf{Statement of AI use} LLMs were used to read and understand the OpenAI preprint, to revise and format this note and to generate figures from their description.

\appendix
\section{Sign reversal}\label{sign-reversing}

We now prove $\iota_1$ and $\iota_2$ are sign-reversing

\begin{lemma}\label{i1-sign}
For all $o \in \cO$, $\iota_1(o) = o$ or $\sgn(\iota_1(o)) \neq \sgn(o)$
\end{lemma}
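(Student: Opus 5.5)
We must show: if $\iota_1(o)\neq o$, then the sign flips. Write $o=(M_1,M_2,(w_1,e_2))$. When $o$ is not fixed, $\iota_1(o)=(M_1\symd C,M_2,(w_1,e_2))$, where $C$ is the first cycle of $H-M_2$. We have $\sgn(o)=\cw(M_1,M_2)+\dir(M_2,w_1,e_2)$. It therefore suffices to show two things: the $\cw$ term flips, and the $\dir$ term is unchanged.

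For the first term, we showed in the well-definedness discussion of $\iota_1$ that $C$ alternates between $M_1$-edges and non-$M_1$-edges. Hence $(M_1\symd C,M_2)$ is again an ordered 2-factor of $H$, and Lemma~\ref{disk-lemma} applies directly, giving $\cw(M_1\symd C,M_2)\neq\cw(M_1,M_2)$. Since $\cw$ takes values in $\{0,1\}$, it changes by exactly $1 \bmod 2$.

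For the second term, $\dir(M_2,w_1,e_2)$ records whether $e_2$ crosses the half-ray of $w_1$ from left to right in $\orient(M_1,M_2)$. The direction of an $M_2$-edge in $\orient$ is always face-vertex to black vertex, independent of $M_1$. The object keeps the same $M_2$, the same $w_1$ and the same $e_2$, and the drawing is fixed. So the crossing direction, and hence $\dir$, is identical for $o$ and $\iota_1(o)$. Also, $e_2$ is still an $M_2$-edge crossing the half-ray, so $\iota_1(o)$ is a valid object.

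Adding the two terms mod 2, the sign changes. The only substantive step is the $\cw$ flip, which is exactly Lemma~\ref{disk-lemma}. The rest is the observation that $\dir$ depends only on $M_2$ and the fixed drawing, which follows immediately from the definition of $\orient$.
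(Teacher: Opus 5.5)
Your proposal is correct and matches the paper's proof: $\dir$ is unchanged because $M_2$, $w_1$, $e_2$ and the face-to-black orientation of $M_2$-edges in $\orient$ are untouched, and the $\cw$ term flips by Lemma~\ref{disk-lemma}. Your extra remarks that $(M_1 \symd C, M_2)$ is again an ordered 2-factor and that $e_2$ is still a valid crossing just make explicit the well-definedness that the paper checks in a separate subsection.
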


\begin{proof}
$\iota_1(M_1, M_2, x)$ is fixed if $H - M_2$ contains no cycles and

$\iota_1(M_1, M_2, x) = (M_1 \symd C, M_2, x)$, where $C$ is the first cycle of $H - M_2$.

Note that $x = (w_1, e_2)$ is unaffected as $M_2$ is unchanged and its orientation in $\orient(M_1, M_2)$ is unchanged ($\orient(M_1, M_2)$ orients the edges of $M_1$ and $M_2$ independently). So $\dir(M_2, w_1, e_2)$ is also unchanged.

By Lemma~\ref{disk-lemma}, $\cw(M_1, M_2) \neq \cw(M_1 \symd C, M_2)$ and so $(M_1, M_2, x)$ and $(M_1 \symd C, M_2, x)$ have opposite signs.
\end{proof}

\begin{lemma}\label{i2-sign}
For all $o \in \cO$, $\iota_2(o) = o$ or $\sgn(\iota_2(o)) \neq \sgn(o)$
\end{lemma}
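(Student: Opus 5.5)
We check each of the three non-fixed cases in the definition of $\iota_2$ separately, tracking how $\cw(M_1,M_2)$ and $\dir(M_2,w_1,e_2)$ change. Recall $\sgn = \cw + \dir \pmod 2$. So it suffices to show that in each case exactly one of the two terms flips.

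\textbf{Case 1: some cycle $C'$ avoids $e_2$.} Here $\iota_2$ swaps along $C'$ and keeps $(w_1,e_2)$. By the lemma on swapping along a cycle of $H[M_1\cup M_2]$, $\cw$ flips. The edge $e_2$ lies on $C \neq C'$, so it stays in $M_2$ and its orientation in $\orient$ is unchanged. Hence $\dir$ is unchanged and the sign reverses.

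For well-definedness I would also note that $\iota_2$ maps such objects back to this case. The set of cycles is unchanged as an edge set, and $C'$ is still the first cycle avoiding $e_2$, so applying $\iota_2$ again undoes the swap.

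\textbf{Cases 2 and 4: $p=p_C(w_1,e_2)$ is defined.} In case 2, $w_1$ is outside $C$ and the half-ray meets $C$ an even number of times. In case 4, $w_1$ is inside and $i_C>1$. The key geometric fact is this: consecutive crossings of the half-ray of $w_1$ with the closed directed curve $C$, at positions $2j-1,2j$ (outside) or $2j,2j+1$ (inside), go in opposite directions. Left-to-right and right-to-left alternate along the ray because the ray alternates between the interior and exterior of $C$.

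\emph{Subcase $p\in M_2$.} The matchings are unchanged, so $\cw$ is unchanged. The new marked edge $p$ crosses in the opposite direction to $e_2$, so $\dir$ flips.

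\emph{Subcase $p\in M_1$.} We swap along $C$, so $\cw$ flips by the earlier lemma. Now $p$ becomes an $M_2$-edge, and swapping reverses the orientation of $C$ in $\orient$: edges of $C$ formerly oriented black$\to$face are now face$\to$black and vice versa. So the direction of $p$ in the new orientation is the reverse of its old direction. Its old direction was opposite to that of $e_2$, so the new $\dir(p)$ equals the old $\dir(e_2)$. Thus $\dir$ is unchanged while $\cw$ flips, and the sign again reverses.

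I would also remark that positions along the ray depend only on $C$ as a point set, which is invariant under the swap. So $p_C$ is consistent and the map is indeed an involution.

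The main obstacle is stating the alternation of crossing directions rigorously. This means using general position (the ray avoids vertices and crosses transversally) and the Jordan curve fact behind the Remark on parity, and verifying the direction convention under reversal of $C$. All else is bookkeeping.
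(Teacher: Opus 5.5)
Your proposal is correct and follows the paper's proof essentially line by line. It uses the same case split: first the swap along the first cycle avoiding $e_2$, then the paired-crossing cases with $p_C(w_1,e_2)\in M_2$ or $p_C(w_1,e_2)\in M_1$, checking in each that exactly one of $\cw$ and $\dir$ flips; your extra remarks on the involution property and on the Jordan-curve reason for alternating crossing directions go slightly beyond the paper but are consistent with it.
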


\begin{proof}
We consider each case in the definition of $\iota_2$.

Let $C$ be the cycle of $H[M_1 \cup M_2]$ containing $e_2$.

\textbf{If some cycle of $H[M_1 \cup M_2]$ doesn't contain $e_2$}, $\iota_2(M_1, M_2, (w_1, e_2)) = (M_1 \symd C', M_2 \symd C', (w_1, e_2))$ where $C'$ is the first cycle of $H[M_1 \cup M_2]$ that doesn't contain $e_2$.

Since $w_1$ and $C$ are unchanged, the orientation of $e_2$ in $\orient(M_1, M_2)$ and $\orient(M_1 \symd C', M_2 \symd C')$ is the same and thus the half-ray intersection direction $\dir$ remains unchanged.

Since $M_1$ and $M_2$ swap edges along $C'$, the orientation of $C'$ is reversed between $\orient(M_1, M_2)$ and $\orient(M_1 \symd C', M_2 \symd C')$. So $\cw(M_1, M_2)$ and $\cw(M_1 \symd C', M_2 \symd C')$ have different parities.

Otherwise, all cycles of $H[M_1 \cup M_2]$ contain $e_2$, so $H[M_1 \cup M_2]$ is a single cycle.

\textbf{If $w_1$ is not in the interior of $C$}, then
\begin{itemize}
\item $\iota_2(M_1, M_2, (w_1, e_2)) = (M_1, M_2, (w_1, p_C(w_1, e_2)))$ if $p_C(w_1, e_2) \in M_2$, and
\item $\iota_2(M_1, M_2, (w_1, e_2)) = (M_1 \symd C, M_2 \symd C, (w_1, p_C(w_1, e_2)))$ if $p_C(w_1, e_2) \in M_1$
\end{itemize}

In the first case, $M_1$ and $M_2$ remain unchanged so the $\cw$ is unchanged. $p_C$ pairs consecutive edges so $e_2$ and $p_C(w_1, e_2)$ cross the half-ray of $w_1$ in opposite directions. So $\dir(M_2, w_1, e_2) \neq \dir(M_2, w_1, p_C(w_1, e_2))$. So $\iota_2$ is sign-reversing in this case, as required.

In the second case, the direction of consecutive edges the half-ray of $w_1$ meets still changes direction but $M_1$ and $M_2$ swap on $C$ so the orientation of all cycles, including $C$, changes so $\dir(M_2, w_1, e_2) = \dir(M_2 \symd C, w_1, p_C(w_1, e_2))$ as there are two orientation changes. But since we swap on $C$, $\cw$ changes as well. So $\iota_2$ is sign-reversing in this case, as required.

Otherwise $M_1 \cup M_2$ is a single (Hamiltonian) cycle of $H$ and $w_1$ is in $\Int(C)$.

\textbf{If $i_C(w_1, e_2) = 1$}, $\iota_2(M_1, M_2, (w_1, e_2)) = (M_1, M_2, (w_1, e_2))$

This is a fixed point so nothing to prove.

\textbf{If $i_C(w_1, e_2) > 1$} then
\begin{itemize}
\item $\iota_2(M_1, M_2, (w_1, e_2)) = (M_1, M_2, (w_1, p_C(w_1, e_2)))$ if $p_C(w_1, e_2) \in M_2$, and
\item $\iota_2(M_1, M_2, (w_1, e_2)) = (M_1 \symd C, M_2 \symd C, (w_1, p_C(w_1, e_2))) = (M_2, M_1, (w_1, p_C(w_1, e_2)))$ if $p_C(w_1, e_2) \in M_1$.
\end{itemize}

This is the same as the case where $w_1$ is outside $C$. $e_2$ and $p_C(w_1, e_2)$ are consecutive edges the half-ray of $w_1$ intersects. In the first case, $\dir$ changes but $\cw$ remains the same while in the second case, $\cw$ changes but $\dir$ remains the same, having been flipped twice.
\end{proof}

\section{Extending to 2-factors with more cycles}\label{sec:large-cmin}

Recall $c_{\min}$ is the minimum number of cycles among all 2-factors of $H$. We defined and verified Lemma~\ref{lem:matching} for $c_{\min} = 1$. This is when $H$ has a 2-factor containing a single (Hamiltonian) cycle.

When $c_{\min} > 1$, our combinatorial objects have $c_{\min}$ intersections of $M_2$-edges and white half-rays along with the usual two matchings.

Let $o = (M_1, M_2, \{(w_1, e_{2,1}), \ldots, (w_{c_{\min}}, e_{2,c_{\min}})\}) \in \cO$ be such an object.

\[ \sgn(M_1, M_2, (w_1, e_{2,1}), \ldots, (w_{c_{\min}}, e_{2,c_{\min}})) = \cw(M_1, M_2) + \sum_{i=1}^{c_{\min}} \dir(M_2, w_i, e_{2,i}) \]

$\iota_1$ remains unchanged as its condition depends only on one of the matchings $M_2$ and not the intersections.

Most cases of $\iota_2$ also remain unchanged. Namely when there is a cycle without any intersection edges.

The only remaining case is when there are exactly $c_{\min}$ cycles in $H[M_1 \cup M_2]$ and each cycle $C_i$ contains a different edge $e_{2,i} \in \{e_{2,1}, \ldots, e_{2,c_{\min}}\}$.

If some $w_i$ is not in $\Int(C_i)$ for some $i$, apply the single cycle involution to the first cycle $C_i$ that doesn't contain $w_i$ among all $i$.

\begin{itemize}
\item $\iota_2(M_1, M_2, (w_1, e_{2,1}), \ldots, (w_i, e_{2,i}), \ldots, (w_{c_{\min}}, e_{2,c_{\min}}))$ = 
        $(M_1, M_2, (w_1, e_{2,1}), \ldots, (w_i, p_{C_i}(w_i, e_{2,i})), \ldots, (w_{c_{\min}}, e_{2,c_{\min}}))$ if $p_{C_i}(w_i, e_{2,i}) \in M_2$, and
\item $\iota_2(M_1, M_2, (w_1, e_{2,1}), \ldots, (w_i, e_{2,i}), \ldots, (w_{c_{\min}}, e_{2,c_{\min}}))$ = 
        $(M_1 \symd C_i, M_2 \symd C_i, (w_1, e_{2,1}), \ldots, (w_i, p_{C_i}(w_i, e_{2,i})), \ldots, (w_{c_{\min}}, e_{2,c_{\min}}))$ if $p_{C_i}(w_i, e_{2,i}) \in M_1$
\end{itemize}

In the remaining case, each $w_i$ is in $\Int(C_i)$.

Otherwise, switch to the other edge $p_{C_i}(w_i, e_{2,i})$ along the first cycle $C_i$ where $e_{2,i}$ isn't the first edge on $w_i's$ half-ray.

So $\iota_2$ keeps fixed objects with exactly $c_{\min}$ cycles with intersecting first edges and corresponding white vertex inside each cycle. Each cycle contributes the same value to the sign so the overall sign of any fixed object is $\lfloor c_{\min}/2\rfloor \bmod 2$.

\section{3-edge cuts}\label{sec:3edgescut}

In this appendix, we complete the proof of the main theorem, which we now restate, by handling the case where $G$ has a non-trivial 3-edge cut.

\begin{restatement}[Theorem]{stronger}
For every Barnette graph $G$ and every edge $e_0 \in G$, $G - e_0$ has a Hamiltonian cycle.
\end{restatement}

\begin{proof}
Let $G$ be a minimum counterexample to Theorem~\ref{stronger}. If $G$ has no non-trivial 3-edge cut, by Theorem~\ref{cyclic4}, $G$ is not a counterexample.

Let $\delta(X)$ be a non-trivial 3-edge cut and $\delta(X) = \delta_B \cup \delta_W$ where

\begin{itemize}
\item $\delta_B$ is all edges of $\delta(X)$ with a black end in $X$, and
\item $\delta_W$ is all edges of $\delta(X)$ with a white end in $X$.
\end{itemize}

Counting edges in $G[X]$ from the white and black sides gives

\[ 3|B \cap X| - |\delta_B| = 3|W \cap X| - |\delta_W| \]

rearranging to

\[ 3(|B \cap X| - |W \cap X|) = |\delta_B| - |\delta_W| \]

Since the RHS is in $\{-3, -1, 1, 3\}$ and the LHS is a multiple of 3, the RHS is -3 or 3. In other words, all edges in $\delta(X)$ have the same coloured end in $X$, say black. If two edges of $\delta(X)$ shared an end $v \in X$, then $\delta(X - v)$ would be a 2-edge cut. So all edges of $\delta(X)$ have different ends in $X$, and by symmetry different ends in $V(G) - X$. Hence $G/X$ and $G/(V(G) - X)$ are simple. If $G[X]$ were disconnected, some component $K$ would have $|\delta(K)| \leq 1$, a bridge. So $G[X]$ is connected, and by symmetry so is $G[V(G) - X]$.

\begin{claim}
$G_1 = G/X$, the graph obtained from $G$ by contracting all of $X$ to a single vertex $x$, is a Barnette graph.
\end{claim}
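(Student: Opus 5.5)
We check the four defining properties of a Barnette graph for $G_1 = G/X$ (cubic, bipartite, planar, 3-connected), using what was just established about the cut $\delta(X)$. These facts are: all three edges of $\delta(X)$ have black ends in $X$ (so white ends in $V(G) - X$), these ends are pairwise distinct on both sides, and both $G[X]$ and $G[V(G)-X]$ are connected.

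\emph{Cubic and simple.} Every vertex of $V(G) - X$ keeps its three incident edges. The three cut edges have distinct ends outside $X$, so no parallel edges are created, and $G_1$ is simple. The new vertex $x$ has degree $|\delta(X)| = 3$.

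\emph{Bipartite.} Colour $x$ black and keep the colours of $V(G)-X$. Every edge of $G_1$ is either an edge of $G[V(G)-X]$, which is properly coloured, or a cut edge from $x$ to a white vertex of $V(G)-X$. So the colouring is proper.

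\emph{Planar.} Contract the connected subgraph $G[X]$ one edge at a time. Contracting an edge preserves planarity, and at the end all of $X$ has become the single vertex $x$. Loops or parallel edges that arise inside $X$ during the process are deleted, and the final graph is $G_1$, which is simple by the previous step.

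\emph{3-connected.} This is the step that needs real work. I would argue via edge connectivity. A cubic simple graph is 3-connected iff it is 3-edge-connected, since a 2-vertex cut in a cubic graph yields an edge cut of size at most 2. Let $\delta_{G_1}(Y)$ be an edge cut of $G_1$. Replace $x$ by $X$ if $x \in Y$, obtaining $Y' \subseteq V(G)$. Then $\delta_G(Y') = \delta_{G_1}(Y)$ as edge sets, and $Y'$ and $V(G) - Y'$ are both nonempty. Since $G$ is 3-connected and cubic, it is 3-edge-connected, so $|\delta_{G_1}(Y)| \geq 3$. Hence $G_1$ is 3-edge-connected, and therefore 3-connected.

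For the last step I must make sure the cubic equivalence is applied correctly, i.e. that $G_1$ has at least 4 vertices. This holds because the non-trivial cut gives $|V(G) - X| \geq 2$, and in fact $|V(G)-X| \geq 3$: the three distinct white ends alone give three vertices. So $|V(G_1)| \geq 4$, and $G_1$ is a Barnette graph.
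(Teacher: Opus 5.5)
Your proof is correct and follows essentially the same route as the paper. Like the paper, you get bipartiteness from the cut ends in $X$ sharing a colour, cubicity from $|\delta(X)| = 3$, planarity from contracting the connected $G[X]$, and $3$-connectivity by turning a small vertex cut of the simple cubic $G_1$ into a small edge cut and lifting it to an edge cut of the same size in $G$. Your extra checks, simplicity and $|V(G_1)| \geq 4$, only make explicit details the paper states just before the claim or leaves implicit.
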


\begin{proof}[Proof of the claim]
Since all ends of $\delta(X)$ in $X$ have the same colour, $G/X$ is bipartite. It is cubic since $|\delta(X)| = 3$ and all other vertex degrees are the same in $G$ and $G_1$. Contracting the connected subgraph $G[X]$ preserves planarity so $G_1$ is planar.

Since $G_1$ is simple and cubic, a $\leq2$ vertex cut $Y$ is a $\leq2$ edge cut by taking the appropriate edges in $\delta(Y)$. Since an edge cut of $G_1$ is an edge cut of the same size in $G$, $G_1$ is 3-connected.
\end{proof}

By symmetry, $G_2 = G/(V(G) - X)$ is a Barnette graph. By symmetry, we may assume $e_0$ does not have both ends in $X$ and is therefore in $E(G_1)$. Both $G_1$ and $G_2$ are smaller than $G$ since both sides of a non-trivial cut have at least 2 vertices. By minimality $G_1$ has a Hamiltonian cycle $C_1$ missing $e_0$. Say it also doesn't use $e' \in \delta(X)$.

By minimality, $G_2$ has a Hamiltonian cycle $C_2$ missing $e'$. Now $C_1 \cup C_2$ is a Hamiltonian cycle of $G$, a contradiction.
\end{proof}

\section*{Table of symbols}

{\small
\begin{longtable}{@{}p{0.20\textwidth}|p{0.76\textwidth}@{}}
Symbol & Typical meaning \\
\hline
\endhead
$(G, B, W, b_0w_0)$ & a rooted Barnette graph \\
$B, W$ & black and white vertices of $G$ \\
$B_{S, i}, W_{S, i}, B_i, W_i$ & black (white) vertices with exactly $i$ neighbouring faces in $S$ \\
$c_{\min}$ & minimum number of cycles of a 2-factor of $H$ \\
$d(S)$ & deficit of a set $S$ \\
$f_{\mathrm{out}}$ & the face-vertex of the outer face of $G$ \\
$G_S$ & graph induced by the union of the facial cycles of the faces in $S$ \\
$\GFB$ & face-black vertex incidence graph of $G$ \\
$H$ & $\GFB - N[b_0]$ \\
$p_X(w, e_i)$ & partner of $e_i$ in the pairing along the half-ray of $w$ \\
$(M_1, M_2)$ & an ordered 2-factor \\
$T_{BW}$ & black dual trace of $T_W$, a subgraph of $G - b_0$ \\
$T_{FB}$ & the spanning tree $\GFB - M_2$ of $\GFB$ \\
$T_W$ & the spanning tree $M_2^*$ of the white vertices \\
$T_{\vec W}$ & $T_{BW}$ oriented towards $w_0$ \\
$t_{C, \mathrm{in}}, t_{C, \mathrm{out}}$ & black vertices of $C$ whose third edge is inside (outside) $C$ \\
$\aux(G)$ & $G$ plus a face-vertex in each face, joined to all vertices of the face \\
$\cw'(M_1, M_2)$ & clockwise minus counterclockwise cycles of $\orient(M_1, M_2)$ \\
$\cw(M_1, M_2)$ & $\lfloor\cw'(M_1, M_2)/2\rfloor \bmod 2$ \\
$\dir(M_2, w, e_2)$ & 0 if $e_2$ crosses the half-ray of $w$ from left to right, 1 otherwise \\
$\Ext(C), \Int(C)$ & exterior and interior of the cycle $C$, including $C$ \\
$\orient(M_1, M_2)$ & orientation of $M_1 \cup M_2$ with $M_1$ directed black to face-vertex, $M_2$ directed face-vertex to black \\
$\sgn(o)$ & $\cw + \dir \bmod 2$ \\
$\trB(X)$ & black dual trace of a set of edges $X$ \\
$\delta(X)$ & cut edges: edges with exactly one end in $X$ \\
$\delta_B, \delta_W$ & edges of $\delta(X)$ with a black (white) end in $X$ \\
$\iota_1, \iota_2$ & two sign-reversing involutions on $\cO$ \\
$\tau(M_1, M_2, b)$ & 1 if the turn at $b$ from its $M_2$-edge to its $M_1$-edge is clockwise \\
$\cO$ & the set of objects $(M_1, M_2, (w_1, e_2))$ \\
$\cO^+, \cO^-$ & objects of sign 0 and of sign 1 \\
$X^*$ & planar dual of a graph, or the dual edges of a set of edges \\
$\Pbar$ & $E(G) - P$ \\
$X \symd Y$ & $(X \cup Y) - (X \cap Y)$ \\
$x \oplus 1$ & $x$ with its lowest binary bit flipped \\
\end{longtable}
}

\end{document}